\newtheorem{theorem}{Theorem}
\newtheorem{corollary}[theorem]{Corollary}
\newtheorem{proposition}[theorem]{Proposition}
\newtheorem{definition}[theorem]{Definition}
\newtheorem{axiom}[theorem]{Axiom}
\newtheorem{lemma}[theorem]{Lemma}
\newtheorem{statement}[theorem]{Statement}
\begin{document}
\title{Deciding the Continuum Hypothesis \\ with the Inverse Powerset}
\author{Patrick St-Amant}
\date{}

\maketitle

\begin{abstract}We introduce the concept of inverse powerset by adding three axioms to the Zermelo-Fraenkel set theory. This extends the Zermelo-Fraenkel set theory with a new type of set which is motivated by an intuitive meaning and interesting applications. We present different ways to extend the definition of cardinality and show that one implies the continuum hypothesis while another implies the negation of the continuum hypothesis. We will also explore the idea of empty sets of different cardinalities which could be seen as the empty counterpart of Cantor's theorem for infinite sets.\end{abstract}

\section{Introduction}
Relying on G\"{o}del's results, Cohen proved the independence of Cantor's continuum hypothesis from Zermelo-Fraenkel set theory with the axiom of choice \cite{Goedel, Cohen1, Cohen2}. Many axioms have been considered to extend Zermelo-Fraenkel set theory in a way which would permit a proof or a disproof of the continuum hypothesis. In particular, the axiom of constructibility implies the generalized continuum hypothesis \cite{Goedel} and the Freiling's axiom of symmetry is equivalent to the negation of the continuum hypothesis \cite{Freiling}. More recently, Woodin developed an extensive framework aimed at disproving the continuum hypothesis \cite{Woodin1, Woodin2}. Accepting or rejecting such axioms often becomes a matter of taste and `intuition', but each of those suggestions give rise to new systems of axioms which are worth studying by themselves or in relation to the others. In the following, we will formalize the concept of inverse powerset which suggests another approach to the continuum hypothesis. This view is analogous to numbers and as discussed in section \ref{motivation} it is motivated by intuition and usefulness.

The negative numbers have been defined to allow us to solve equations such as $2+y=1$. Similarly, the complex numbers have been introduced by Euler to consider solutions of equations such as $y^2=-1$. In the realm of set theory, in particular Zermelo-Fraenkel set theory (\textit{ZF}), we can ask a similar question and try to find a set $Y$ such that the statement  $P(Y)=X$ is true for some fixed set $X$. For example, if we take $X=\{\emptyset,\{a\},\{b\},\{a,b\}\}$, then we find that $Y=\{a,b\}$ satisfies the statement. But if we want a set $Y$ which would satisfy the statement $P(Y)=\{\emptyset,\{a\},\{a,b\}\}$, we have to define a new type of set in a way that is similar to the introduction of the negative and complex numbers.

One can wonder about the internal structure of $Y$ where $Y$ satisfies $P(Y)=X$ and where $X$ is not a powerset. It turns out that $Y$ can be seen as an object composed of subsets. Take $W$ to be a classical set of ZF, then it is always the case that a subset of a subset of $W$ is a subset of $W$. In our case, for $Y$, it is possible a subset of a subset of $Y$ is not a subset of $Y$. In itself, this could be seen as a generalization where a generalized set $Z$ is a classical Zermelo-Fraenkel set if each member of the subsets of $Z$ is a member of $Z$. Surprisingly, this generalization of sets fits perfectly with the concept of inverse powerset and also comes with a philosophical intuition. We will discuss this in details in the subsequent sections.

The goal of this paper is twofold, first, to introduce the concept of the inverse powerset by adding three axioms to a slightly modified Zermelo-Fraenkel set theory. We will give some arguments justifying the introduction of the concept of inverse powerset in set theory. Secondly, in this extended setting, we will show that it is possible to prove or disprove the continuum hypothesis by choosing the suitable extended definition of cardinality. Relying on this extended Zermelo-Fraenkel set theory, we will discuss an interesting avenue of research where we can consider empty sets of different cardinality in the same way that there are infinite sets of different cardinality.

In this paper, we will not prove the consistency of the extended Zermelo-Fraenkel set theory or the relative consistency of the three axioms with ZF, but we will see that introducing such axioms can be used to decide the continuum hypothesis. Having introduced new types of sets, we find that we have to generalize the definition of cardinality and we will see that one choice of generalization implies the continuum hypothesis while others imply the falsity of the continuum hypothesis. The question of consistency is a serious question which has to be answered eventually. It is the belief of the author that a framework including the concept of inverse powerset gives us a rich theory to further investigate the foundation of mathematics.

\section{Motivation}\label{motivation}

Before giving the formal axioms and definitions, we will give a few arguments which motivate the use of the concept of the inverse powerset with the intention of showing its usefulness and possibly even its unavoidability.

\subsection{Inverse Powerset and the Continuum Hypothesis}

The idea of inverse powerset first occurred in relation to the continuum hypothesis and it will be the main application presented in this paper. Informally, if there exists a set $B$ such that $|\mathbb{N}|<|B|<|P(\mathbb{N})|$ then `applying' the inverse powerset operator to those inequations we get $|P^{-1}(\mathbb{N})|<|P^{-1}(B)|<|P^{-1}(P(\mathbb{N}))|=|\mathbb{N}|$. This way, the question of the continuum hypothesis is reduce to choosing a definition of cardinality for objects of the type $|P^{-1}(\mathbb{N})|$. If a definition of cardinality implies $|P^{-1}(\mathbb{N})|=|\mathbb{N})|$, then by an adequate squeeze theorem, we have that $|P^{-1}(B)|=|\mathbb{N}|$ which will induce a contradiction and thus prove the continuum hypothesis. If we choose another definition of cardinality which implies $|P^{-1}(\mathbb{N})|<|\mathbb{N})|$, then, in a proper setting, we have the negation of the continuum hypothesis. This will be discussed in detail in section \ref{negcontinuum}.

A primary investigation suggests taking the following statements as axioms.

\begin{statement}[Inverse powerset]\label{axiomInverse}$$ \forall X \exists Y[ (P(Y)=X)]$$\end{statement}

We would denote $Y$ as $P^{-1}(X)$ and call it the \textit{inverse powerset} of $X$. Hence, for some fixed $X$, we introduce a new type of set $Y$ which satisfies the statement $P(Y)=X$.  Similarly to what is commonly done with the power of a set, we can consider $P^{-1}$ to be an operator. For example, the set $Y=\{a,b\}$ satisfies the statement $P(Y)=\{\emptyset,\{a\},\{b\},\{a,b\}\}$. This axiom can be seen as allowing a set theory which is `closed' under the powerset operator $P$.

We now give another statement which aims at expressing the notion that $P^{-1}$ is the inverse operator of $P$. This statement has the purpose of allowing the proof of useful propositions.

\begin{statement}[Invertibility]\label{axiomInvertibility}$$ \forall X [ P^{-1}(P(X))=X ]$$\end{statement}

If we take the set $\{\emptyset,\{a\},\{b\},\{c\},\{a,b\},\{a,c\},\{b,c\},\{a,b,c\}\}$ and apply $P^{-1}$ to it, we find $P^{-1}(\{\emptyset,\{a\},\{b\},\{c\},\{a,b\},\{a,c\},\{b,c\},\{a,b,c\}\})=P^{-1}(P(\{a,b,c\}))=\{a,b,c\}.$

Taken as axioms, we can prove most of the needed properties to construct a reliable system, but they do not inform us about the internal structure of inverse powerset. Moreover, how can we apply the powerset operator on a set $P^{-1}(X)$ with an unknown internal structure? In the following section, we present axioms which entail the above statements and which have the quality of being more elegant, in particular, the inverse powerset axiom has the flavor of being the dual of the powerset axiom.

\subsection{Duality}

As mentioned, the concept of inverse powerset can be approached with the statement $\forall X \exists Y[ (P(Y)=X)]$, but a closer investigation and the fact that the powerset must be well defined yields two axioms close to the following forms.

$$\forall X\exists Y\forall A[A\in_1 X\Leftrightarrow A\in Y],$$
$$\forall Y\exists X\forall A[A\in Y\Leftrightarrow A\in_1 X].$$

Here, `$\in_1$' is a new membership relation symbol and we read $(A\in_1 B)$ as $A$ is a subset-member of $B$, in particular, if $A$ and $B$ are sets of Zermelo-Fraenkel set theory then $\in_1$ is the subset relation $\subseteq$. Note that in the following sections, the sets which come from ZF will formally be defined as the elements of the Von Newmann universe and will be called Zermelo sets. We will explain in more detail in the subsequent sections. We will see that taken as axioms, those statements imply $ \forall X \exists Y[ (P(Y)=X)]$ and $ \forall X [ P^{-1}(P(X))=X ]$. Moreover, they give rise to the internal structure of objects of the form $P^{-1}(Y)$. It is probably a matter of taste, but there is a certain elegance which reminds one of duality and adjoint functors of category theory (Chapter 4 of \cite{Maclane}). In a personal communication, William Lawvere suggested that from a categorical point of view, the inverse powerset could also be seen in the setting of adjoints.

\subsection{Generalized Sets}\label{generalized sets}

We already mentioned that it is possible to give a generalization of sets, where a generalized set can be seen as only made out of subsets. For example, such a generalized set is the object which only have the subsets $\{a\}$ and $\{a,b\}$. In this context, a set of ZF is defined as a set such that each subset of the subsets of $X$ is also a subset of $X$. In contrast, the generalized set having $\{a\},\{a,b\}$ and $\{b\}$ as subsets can be considered to be a set of ZF. This is an interesting generalization in itself and surprisingly, it follows from the concept of inverse powerset.

\subsection{Parts and Collections of Objects}

There is an interpretation of the inverse powerset which is more rooted in our worldly intuition. Without a formal mathematical theory like ZF, the concept of a set is naturally accessible. Primary-grade children naturally have an intuition of a collection of objects \cite{suppes_psy} and this capacity could be seen as an inherent cognitive process. There is another cognitive process which could be seen as the counterpart of the process of collection. When we look at an object as a whole, we have the capacity to recognize parts of this object. From another point of view, this capacity can be seen as being able to insert this whole object in a universe where it can be seen as composed of smaller constituents. For example, take as objects the natural numbers $1, 2$ and $3$. We can make a few different sets containing these numbers, for example $\{1,3\}$, $\{2,3\}$, $\{3\}$, $\{1,2,3\}$ and so on. At first, natural numbers can be seen as `whole' numbers which cannot be divided. By introducing the operation of division and the concept of fractions, it is possible to take a natural number and put it in the universe of fractions. This natural number is not seen as a whole but as an entity which is composed of smaller parts, for example, $1$ can be seen as the sum of $\frac{1}{2}+\frac{1}{2}$ or $\frac{3}{4}+\frac{1}{4}$.
For another example, take 5 flowers of different colors. Out of those 5 flowers we can create a few different bouquets of different size, this is analogous to the powerset operator. For an analogy of the inverse powerset, take one flower, then there is an extended set U (or a universe) where this flower is considered to be a subset or composed of elements. For example, in the present case, the set U could be a set of distinct cells. In the case of a flower, there is a way to see a flower as a subset of certain `smaller' elements such that we can assigns types of elements to different parts of the flower. For example, there are different types of cells such as petal cells, stalk cells and so on. The fact that we can differentiate between different parts is important. This is exactly what the inverse powers set permits us to do, but a good understanding of this necessitates a close examination of the axioms of inverse powerset.

In our primary development of an extended set theory (as done in section \ref{EZF level 1}), we do not have objects of the form $P^{-1}(P^{-1}(X))$, but in section \ref{level n} we will define a setting which allows objects such as $P^{-1}(P^{-1}(X))$ and $P^{-1}(P^{-1}(P^{-1}(P^{-1}(X))))$. Extending such constructions indefinitely, we have an interesting philosophical idea that `everything' can be seen as being composed of different types of parts. This idea is closely related to important questions of the philosophical theory of mereology \cite{Varzi}.

\subsection{Logarithm and Extended Cardinal Arithmetic}

The inverse powerset can help in defining the concept of logarithm for cardinal numbers. As seen above, the inverse powerset can be seen as a set $Y$ satisfying the statement $P(Y)=X$. With the cardinal numbers framework, we can translate this into $2^{\mid Y\mid}=\mid X\mid$ and this is similar to saying that $\mid Y\mid$ is the `exponent' we have to give to $2$ to get $X$. In this case, it could be interesting to denote $\mid Y\mid$ as $\log_2(\mid X\mid)$. An interesting avenue of investigation would be to enrich the cardinal number arithmetic with logarithms and all arithmetic functions.

It is well known that the ordinals extend the natural numbers and that each natural number can be represented by a set. It would be interesting to have an extended set of ordinals which extend the real numbers, in particular this would mean that we could associate a set to each real number. A not-so distant goal would be to have a cardinal arithmetic which acts as the real numbers arithmetic.

There might also be interesting applications in topology where cardinal invariants play a very important role.

\subsection{Algebraically Closed Set Theory}

Extending ZF using the inverse powerset provides us a setting where the formula $P(Y)=X$ has a `solution' in $Y$ for any set $X$ from Zermelo-Fraenkel. We will see in section \ref{level n} that there is an extended setting where we have solutions to formulas $P(P(...P(X)...))=X$ where $P$ appears a finite number of times. Those are a few steps toward a set theory that is `algebraically closed' and this topic can be investigated further through the concept of existentially closed models.

\subsection{Empty Sets of Different Cardinalities}

Similarly to what is done for infinite sets, the concept of inverse powerset will permit us to investigate empty sets of different cardinalities. This will be discussed in more detail in section \ref{emptysets}.

\section{Extending Zermelo-Fraenkel}\label{EZF level 1}

We extend Zermelo-Fraenkel set theory by adding the concept of the inverse powerset. We will add three axioms to a slightly modified Zermelo-Fraenkel set theory and prove a few practical propositions.

\subsection{Inverse Powerset Axiom}

Similarly to what is done when passing from the real numbers to the complex numbers, we need to extend the class of the classical sets which arises from the Zermelo-Fraenkel set theory with the new types of sets. To describe the class of classical sets which comes from ZF, we will take $\mathcal{V}$ be the Von Newmann universe and call each member of $\mathcal{V}$ a \textit{Zermelo} set.

To the classical language of ZF, we add a subset membership relation symbol `$\in_1$'. A primitive atomic formula containing this symbol is of the form $(A\in_1 B)$ where $A$ and $B$ are either general variables or the constant $\emptyset$. We will read $(A\in_1 B)$ as $A$ is subset-member of $B$. Note that sets (Zermelo or not) can be represented by either an uppercase or lowercase letter.

Our extended system of axioms will be composed of the axioms of ZF without the Powerset axiom such that each axiom is over $\mathcal{V}$. In other words, each variable bounded by the quantifiers `$\forall$' and `$\exists$' take values from $\mathcal{V}$. To those axioms we add the three following axioms.

We now give the axioms giving rise to the powerset and inverse powerset.

\begin{axiom}[Powerset]$$\forall X(\exists Y\in\mathcal{V})(\forall A\in\mathcal{V})[A\in_1 X\Leftrightarrow A\in Y]$$\end{axiom}

We will denote $Y$ as $P(X)$ and call it the \textit{powerset} of $X$. If we replace `$\in_1$' by `$\subseteq$' in the previous axiom we have exactly the classical powerset axiom. We will see that the axiom \ref{subset assignment} below will provide this replacement when $X$ and $Y$ are a Zermelo set. Let $\mathcal{V}^*$ be the $\mathcal{V}$ without the empty set.

\begin{axiom}[Inverse powerset]$$(\forall Y\in\mathcal{V}^*) \exists X(\forall A\in\mathcal{V})[A\in Y\Leftrightarrow A\in_1 X]$$\end{axiom}

We will denote $X$ as $P^{-1}(Y)$ and call it the \textit{inverse powerset} of $Y$. For example, take $Y=\{a,b\}$, then by the inverse powerset axiom, there exists a set $X$ such that $a$ and $b$ are its subset-members. By the powerset axiom, there exists a set $Y$ such that $a$ and $b$ are its elements.

The following axiom is needed to make the subset-membership symbol `$\in_1$' correspond to the subset symbol `$\subseteq$' for the case of Zermelo sets. This axiom which makes the link between the classical subset symbol and the subset-membership symbol will be discussed in detail in the next section.

\begin{axiom}[Subset Assignment]\label{subset assignment}$$(\forall X\in\mathcal{V})(\forall A\in\mathcal{V})[\forall x(x\in A\Rightarrow x\in X)\Leftrightarrow (A\in_1 X)]$$\end{axiom}

Before going further, let's look at another example. Take $X=\{a,b\}$, then by the powerset axiom $P(X)=\{\emptyset,\{a\},\{b\},\{a,b\}\}$. Using the inverse powerset axiom we have that each element of $P(X)$ are the subset-members of $P^{-1}(P(X))$. Our intuition tells us that $P^{-1}(P(X))$ should in fact be a Zermelo set $X$. This will be proved in section \ref{propositions}.

\subsection{Formal System of Axioms}

We now formally give the extended system of axioms of Zermelo-Fraenkel including the inverse powerset axioms (written without the $P$ and $P^{-1}$ notations). To avoid possible inconsistencies, in particular with the axiom of foundation, we have restricted the axioms of ZF (except for the axiom of powerset) to the Von Newmann universe $\mathcal{V}$ of Zermelo sets. The bolded names of some axioms signify that it is a new axiom or is an axiom of ZF for which the formula was modified. Note that the non-bolded axioms are the axioms of ZF with the difference that each variable under the scope of the quantifiers `$\forall$' and `$\exists$' are taken to be over the Von Newmann universe $\mathcal{V}$. Note that some of the axioms can be proved to be redundant. Notice that we have extended the axiom of extensionality to include the sets which are not Zermelo, this will be discussed in detail in section \ref{section subsets}. Note that the formulation of axioms 7 and 8 are based on \cite{suppes}.

\begin{definition}The extended Zermelo-Fraenkel system is composed of the following axioms.
\begin{itemize}
\item[1.] Null Set:\hspace{1cm} $(\exists X \in\mathcal{V}) (\forall Y \in\mathcal{V})[\neg(Y\in X)].$
\item[2.] \textbf{Extensionality}:\hspace{1cm}$(\forall A\in\mathcal{V})(\forall B\in\mathcal{V})(\forall S\in\mathcal{V})$ $$[(S\in_1 A \Leftrightarrow S\in_1 B)\Rightarrow A=B].$$
\item[3.] Foundation:\hspace{1cm} $(\forall X \in\mathcal{V}) [ (\exists A \in\mathcal{V}) ( A \in X) \Rightarrow$ $$(\exists Y \in\mathcal{V}) ( Y \in X \land \lnot (\exists Z \in\mathcal{V}) (z \in Y \land z \in X))].$$
\item[4.] Pairing:\hspace{1cm}$(\forall A \in\mathcal{V})(\forall B \in\mathcal{V})(\exists C \in\mathcal{V})$ $$[\forall X \in\mathcal{V})(X\in C\Leftrightarrow (X= A\vee X=B)]$$
\item[5.] Union:\hspace{1cm} $(\forall \mathcal{F} \in\mathcal{V})\,(\exists A \in\mathcal{V}) \, (\forall Y \in\mathcal{V})\, (\forall X \in\mathcal{V})$ $$[(X \in Y \land Y \in \mathcal{F}) \Rightarrow X \in A].$$
\item[6.] Infinity:\hspace{1cm} $(\exists X \in\mathcal{V}) \left[\varnothing \in X \land (\forall Y \in\mathcal{V}) (Y \in X \Rightarrow S(Y)  \in X)\right]$.
\item[7.] Separation:\hspace{1cm} $(\forall Z \in\mathcal{V})(\exists Y \in\mathcal{V}) (\forall X \in\mathcal{V})$ $$[X \in Y \Leftrightarrow ( X \in Z \land \varphi(X) )].$$
\item[8.] Replacement:\hspace{1cm} $(\forall X \in\mathcal{V})(\forall Y \in\mathcal{V})(\forall Z \in\mathcal{V})$ $$[X\in A \land \varphi(X,Y) \land \varphi(X,Y)\Rightarrow Y=Z]
\Rightarrow$$ $$(\exists B \in\mathcal{V})(\forall Y \in\mathcal{V})[Y\in B \Leftrightarrow (\exists X \in\mathcal{V})(X\in A \land \varphi(X,Y))].$$
\item[9.] \textbf{Powerset}:\hspace{1cm} $\forall X(\exists Y\in\mathcal{V})(\forall A\in\mathcal{V})[A\in_1 X\Leftrightarrow A\in Y]$.
\item[10.] \textbf{Inverse Powerset}:\hspace{1cm} $(\forall Y\in\mathcal{V}^*) \exists X(\forall A\in\mathcal{V})[A\in Y\Leftrightarrow A\in_1 X]$.
\item[11.] \textbf{Subset Assignment}:\hspace{1cm} $(\forall X\in\mathcal{V})(\forall A\in\mathcal{V})$ $$[(x\in A\Rightarrow x\in X)\Leftrightarrow (A\in_1 X)].$$
\end{itemize}
\end{definition}

Here $\mathcal{V}$ is the Von Newmann universe and $\mathcal{V}^*$ is the Von Newmann universe without the empty set. As usual, in the powerset axiom 9, we will denote $Y$ as $P(X)$ and in the inverse powerset axiom 10, we will denote $X$ as $P^{-1}(Y)$.

We now have a universe of discourse where we can define the class of extended sets.

\begin{definition} We define the class $\text{EZF}$ of rational sets as follows:
\begin{itemize}
\item[1.] If $X\in\mathcal{V}$ then $X\in\text{EZF}$,
\item[2.] If $X\in\mathcal{V}$ then $P^{-1}(X)\in\text{EZF}$,
\end{itemize}
\end{definition}

Sets which are in $\text{EZF}$ but not in $\mathcal{V}$ will be called \textit{non-Zermelo} and the class of all such sets is written as $\text{EZF}\setminus\mathcal{V}$. Examples of non-Zermelo sets are $P^{-1}(\{1,2,4\})$, $P^{-1}(\{\{1,2,4\}\})$, $P^{-1}(A)$, $P^{-1}(\{A,B\})$ and $P^{-1}(\{B,\{A,B\}\})$ for any sets $A,B$ and where $0 = \{\}, 1=\{0\}, 2=\{0,1\}$ and so on.

\subsection{Subsets}\label{section subsets}

We will now describe in more detail the subsets of a non-Zermelo set such as $P^{-1}(X)$. An important idea behind the extended definition of subsets is to have

\begin{center}$P^{-1}(X')\subseteq P^{-1}(X)$ if and only if $X'\subseteq X$.\end{center}

\noindent In analogy with the negative numbers, having defined the negative numbers $-1$ and $-2$ by using equations such as $2+y=1$ and $3+y=1$, we need to order them. By the axioms of powerset and inverse powerset, we have that the non-Zermelo sets are composed of subset-elements.  As discussed in section \ref{generalized sets}, in EZF, we need to shift our focus from the elements of a set to the subsets of a set which seem more general and fundamental. This way, a Zermelo set is defined as a set such that each subset of the subsets of $X$ is also a subset of $X$.

Now, recall that the classical definition of a subset is $$A\subseteq B\Leftrightarrow \forall x(x\in A\Rightarrow x\in B).$$ Again, we give the subset assignment axiom which includes the classical definition.

\begin{axiom}[Subset Assignment]\label{subset assignment}$$(\forall X\in\mathcal{V})(\forall A\in\mathcal{V})[\forall x(x\in A\Rightarrow x\in X)\Leftrightarrow (A\in_1 X)]$$\end{axiom}

For Zermelo sets, `$\in_1$' corresponds to `$\subseteq$' by this axiom. Recall that the classical definition of a subset is $A\subseteq B\Leftrightarrow \forall x(x\in A\Rightarrow x\in B)$.

We now give the definition of subset which will allows proofs of useful propositions and ensures that the new type of sets are also ordered via the subset symbol `$\subseteq$'.

\begin{definition}[Subsets]\label{extsubset} $X\subseteq Y$ if and only if
\medskip
\begin{center}$(\forall A\in\mathcal{V})(A\in X\Rightarrow A\in Y)\wedge (X\in \mathcal{V}) \wedge (Y\in \mathcal{V})$ \\ or \\ $(\forall A\in\mathcal{V})(A\in_1 X\Rightarrow A\in_1 Y)\wedge [(X\notin \mathcal{V})\vee (Y\notin\mathcal{V})]$\end{center}
\end{definition}

Remark that our extended subset definition directly reduces to the classical definition if $A,B$ are Zermelo.

Classically, if $a\in X$, $b\in X$ and $c\in X$ then we write $X$ as $\{a,b,c\}$.  Although we will not use it in the following, we now suggest a notation that helps to visualize non-Zermelo set. If $a\in_1 Y$, $b\in_1 Y$ and $c\in_1 Y$ then we write $Y$ as $\{a_1,b_1,c_1\}$. If $Y$ is non-Zermelo, then $P(Y)=\{a,b,c\}$. If $Y$ is Zermelo, this means that every subset of $a$, $b$ and $c$ is also in $Y$. For example, take $a=\{e\}, b=\{f\}$ and $c=\{e,f\}\}$ so that $Y=\{\{e\}_1,\{f\}_1,\{e,f\}_1\}$, thus it would be enough to write $Y=\{e,f\}$. By the powerset axiom, applying $P$ to $\{\{e\}_1,\{f\}_1,\{e,f\}_1\}$ gives $P(Y)=\{\{e\},\{f\},\{e,f\}\}$ and applying $P$ to $\{e,f\}$ also gives $P(Y)=\{\{e\},\{f\},\{e,f\}\}$. This notation would eventually be very useful if we decide to consider mixed sets where we could have sets such as $\{a_1, a, b, \{a,b\}_1\}$. However, for now our axioms (especially the pairing axiom) does not permit those sets. For us, the concept of cardinality will be slightly more important than the actual content of the rational sets. Cardinality will be explored in detail in the following sections.

The classical axiom of extensionality is written as follows.

 $$\forall A\forall B\forall S[S\in A \Leftrightarrow S\in B]\Rightarrow A=B$$

To be able to consider the non-Zermelo sets and their subset-members, we need to modify the axiom of extensionality.

\begin{axiom}[Extended Extensionality]$$\forall A\forall B\forall S[S\in_1 A \Leftrightarrow S\in_1 B]\Rightarrow A=B$$\end{axiom}

We want to show that the axiom of extended extensionality reduces to the axiom of extensionality of $\text{ZF}$ in the case where $A,B$ are Zermelo sets. Since $A,B$ are Zermelo sets, then by the subset assignment axiom, we have that $S\in_1 A$ and $S\in_1 B$ are respectively equivalent to $S\subseteq A$ and $S\subseteq B$. Thus for Zermelo sets, our axiom of extensionality reduces to the classical axiom of extensionality by using the following proposition.

\begin{proposition}\label{equiv}If $A,B$ are Zermelo, then $$\forall S(S\subseteq A \Rightarrow S\subseteq B)\Leftrightarrow \forall x(x\in A \Rightarrow x\in B) .$$\end{proposition}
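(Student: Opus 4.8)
The plan is to prove the two implications separately, leaning throughout on the fact that for Zermelo sets the symbol $\subseteq$ carries its ordinary, classical meaning. Concretely, since $A,B\in\mathcal{V}$, Definition \ref{extsubset} collapses to its first clause, so $A\subseteq B$ is literally $\forall x(x\in A\Rightarrow x\in B)$; similarly, for any Zermelo set $S$ the statement $S\subseteq A$ unfolds to $\forall x(x\in S\Rightarrow x\in A)$. The Subset Assignment axiom \ref{subset assignment} is exactly what guarantees this coherence (it is what lets us pass freely between $\in_1$ and $\subseteq$ on $\mathcal{V}$), so I would begin by recording that reduction as the first step.

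For the direction ($\Rightarrow$) I would simply instantiate the universally quantified $S$ at $S:=A$. This is legitimate because $A\in\mathcal{V}$, hence $A$ is in the range of the quantifier, and $A\subseteq A$ holds trivially. The hypothesis then yields $A\subseteq B$, which when unwound via the observation of the previous paragraph is precisely $\forall x(x\in A\Rightarrow x\in B)$. That is the entire argument for this direction. For the direction ($\Leftarrow$), assume $\forall x(x\in A\Rightarrow x\in B)$, i.e.\ $A\subseteq B$, and let $S$ with $S\subseteq A$ be arbitrary; then for any $x$, $x\in S$ gives $x\in A$, which gives $x\in B$, so $S\subseteq B$. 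Since $S$ was arbitrary, $\forall S(S\subseteq A\Rightarrow S\subseteq B)$ follows. In short, one direction is a single instantiation and the other is transitivity of inclusion.

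I do not expect a genuine obstacle here; the proposition is routine. The only point that needs care is the bookkeeping that licenses reading $\subseteq$ classically for the sets in play: one must check that $A$, $B$, and the witness $S:=A$ are all Zermelo so that Definition \ref{extsubset} and axiom \ref{subset assignment} apply, and that $\forall S$ may indeed be instantiated at the Zermelo set $A$. (One can moreover note that the $(\Leftarrow)$ direction remains valid even if $S$ is allowed to range over all of $\text{EZF}$, since $X\subseteq P(A)$ and $A\subseteq B$ force $X\subseteq P(B)$, hence $P^{-1}(X)\subseteq B$; but this extra generality is not needed for the proposition as stated.) Once this is in place, both implications are immediate.
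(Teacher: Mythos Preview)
Your proof is correct and follows essentially the same route as the paper: instantiate $S:=A$ for the $(\Rightarrow)$ direction, and use transitivity of elementhood through $S\subseteq A$ for the $(\Leftarrow)$ direction. Your additional bookkeeping about Definition~\ref{extsubset} and axiom~\ref{subset assignment} is more explicit than the paper's version but changes nothing substantively; the parenthetical about non-Zermelo $S$ is extraneous (and would forward-reference later propositions) but harmless.
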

\begin{proof}($\Rightarrow$) Take $A$ for $S$, then we have $(A\subseteq A \Rightarrow A\subseteq B)$ and this means that for all $x\in A$ we have that $x\in A\subseteq B$ and hence $x\in B$.

($\Leftarrow$) Take an arbitrary $S\subseteq A$, then for all $x\in S$ we have $x\in S\subseteq A$. By assumption, $x\in A$ implies that $x\in B$. Therefore for every $x\in S$ we must have that $x\in B$, which can be expressed as $S\subseteq B$ by our definition of subset. Hence, $\forall S(S\subseteq A \Rightarrow S\subseteq B)$.\end{proof}

\begin{proposition}\label{subsetequals0}$A\subseteq A$.\end{proposition}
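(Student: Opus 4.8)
The plan is to unpack Definition \ref{extsubset} directly, with $X=Y=A$, and observe that each of its two clauses contains a universally quantified implication whose antecedent and consequent coincide (either $B\in A\Rightarrow B\in A$ or $B\in_1 A\Rightarrow B\in_1 A$), which is a logical tautology. So the whole argument reduces to checking the side conditions on membership in $\mathcal{V}$, and for that I would split into the two exhaustive cases that the definition itself presupposes: either $A\in\mathcal{V}$ or $A\notin\mathcal{V}$.

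In the first case, $A\in\mathcal{V}$, so the conjunct $(X\in\mathcal{V})\wedge(Y\in\mathcal{V})$ of the first clause of Definition \ref{extsubset} holds with $X=Y=A$; and $(\forall B\in\mathcal{V})(B\in A\Rightarrow B\in A)$ holds trivially. Hence the first clause is satisfied and $A\subseteq A$.

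In the second case, $A\notin\mathcal{V}$, so the bracketed disjunction $[(X\notin\mathcal{V})\vee(Y\notin\mathcal{V})]$ of the second clause holds with $X=Y=A$; and $(\forall B\in\mathcal{V})(B\in_1 A\Rightarrow B\in_1 A)$ again holds trivially. Hence the second clause is satisfied and $A\subseteq A$.

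There is no genuine obstacle here; the proposition is essentially a sanity check that Definition \ref{extsubset} is reflexive. The only point requiring a little care is to keep the case division aligned with the disjunction structure of the definition, so that in each case one verifies exactly one clause together with its membership side condition, and to rename the definition's bound variable (there written $A$) to avoid a clash with the set $A$ of the statement.
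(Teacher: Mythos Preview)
Your proof is correct and follows essentially the same approach as the paper: split into the cases $A\in\mathcal{V}$ and $A\notin\mathcal{V}$, and in each case note that the relevant implication ($B\in A\Rightarrow B\in A$ or $B\in_1 A\Rightarrow B\in_1 A$) is a tautology, so the corresponding clause of Definition~\ref{extsubset} is satisfied. If anything, you are slightly more explicit than the paper in checking the membership side conditions and in avoiding the variable clash.
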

\begin{proof}If $A$ is Zermelo, then it is a logical truth that $\forall x(x\in A\Rightarrow x\in A)$ and thus by the extended definition of subsets we find that $A\subseteq A$. Similarly, if $A$ is not Zermelo, then it is a logical truth that $\forall x(x\in_1 A\Rightarrow x\in_1 A)$, thus by the extended definition of subsets $A\subseteq A$.

\end{proof}

\begin{proposition}\label{subsetequals} $A\subseteq B$ and $B\subseteq A$ if and only if $A=B$.\end{proposition}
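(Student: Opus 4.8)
The plan is to prove the two implications separately. For the ($\Leftarrow$) direction, assume $A=B$; Proposition \ref{subsetequals0} gives $A\subseteq A$, and substituting $B$ for one of the two occurrences of $A$ immediately yields both $A\subseteq B$ and $B\subseteq A$. For the ($\Rightarrow$) direction, assume $A\subseteq B$ and $B\subseteq A$, and split on the structure of Definition \ref{extsubset}. The two clauses of that definition are mutually exclusive — the first forces both arguments to be Zermelo, the second forces at least one of them to be non-Zermelo — so either both $A$ and $B$ lie in $\mathcal{V}$, or at least one does not, and in either situation the two hypotheses $A\subseteq B$ and $B\subseteq A$ must be witnessed through the same clause.

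In the case where at least one of $A,B$ is non-Zermelo, unwinding the second clause of Definition \ref{extsubset} turns the hypotheses into $(\forall S\in\mathcal{V})(S\in_1 A\Rightarrow S\in_1 B)$ and $(\forall S\in\mathcal{V})(S\in_1 B\Rightarrow S\in_1 A)$, hence $(\forall S\in\mathcal{V})(S\in_1 A\Leftrightarrow S\in_1 B)$. Since the $\in_1$-members of any set are exactly the members of its powerset, which is a Zermelo set, every $\in_1$-member already lies in $\mathcal{V}$, so this equivalence in fact holds for all $S$; the Extended Extensionality axiom then gives $A=B$. In the case where both $A$ and $B$ are Zermelo, the first clause of Definition \ref{extsubset} turns the hypotheses into $(\forall x\in\mathcal{V})(x\in A\Leftrightarrow x\in B)$. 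Applying Proposition \ref{equiv} in both directions promotes this to $\forall S(S\subseteq A\Leftrightarrow S\subseteq B)$, and the Subset Assignment axiom (together with the fact that for Zermelo sets $\subseteq$ agrees with the classical subset relation, so $S\subseteq A \Leftrightarrow S\in_1 A$) rewrites this as $\forall S(S\in_1 A\Leftrightarrow S\in_1 B)$; Extended Extensionality again yields $A=B$.

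I expect the only real care needed is in the case analysis: one must check that the two clauses of Definition \ref{extsubset} cannot be mixed between the two hypotheses, and that when translating from $\subseteq$ back to $\in_1$ in the Zermelo case the $\mathcal{V}$-restricted quantifiers and the Subset Assignment axiom line up correctly. The remark that $\in_1$-members are always Zermelo sets is what reconciles the $\mathcal{V}$-restricted quantifiers appearing in Definition \ref{extsubset} with the unrestricted quantifier in Extended Extensionality; apart from that, each step is a routine unwinding of the relevant axiom.
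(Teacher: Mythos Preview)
Your proof is correct and follows essentially the same route as the paper: the same case split on whether both $A,B$ are Zermelo, the same unwinding of Definition~\ref{extsubset}, and the same appeal to (Extended) Extensionality; your $(\Leftarrow)$ direction via Proposition~\ref{subsetequals0} and substitution is identical to the paper's. If anything, you are more careful than the paper in two spots: you explicitly justify upgrading the $\mathcal{V}$-restricted quantifier on $S$ to an unrestricted one (via the observation that $\in_1$-members lie in $\mathcal{V}$), and in the Zermelo case you spell out the passage from $\in$ to $\in_1$ through Proposition~\ref{equiv} and Subset Assignment, whereas the paper simply invokes ``the extensionality axiom'' directly after obtaining $\forall x(x\in A\Leftrightarrow x\in B)$.
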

\begin{proof}($\Rightarrow$) Suppose that $A\in\mathcal{V}$ and $B\in\mathcal{V}$, then by the subset definition we have that $\forall x(x\in A\Rightarrow x\in B)$ and $\forall x(x\in B\Rightarrow x\in A)$ are true. Thus, since $\forall x(x\in A\Leftrightarrow x\in B)$, we can conclude by the extensionality axiom that $A=B$.

Otherwise, suppose that $(X\notin \mathcal{V})\vee (Y\notin\mathcal{V})$, then similarly by the subset definition we find that $\forall x(x\in_1 A\Leftrightarrow x\in_1 B)$. Therefore, by the extensionality axiom and proposition \ref{equiv}, we find that $A=B$.

($\Leftarrow$) Since $A\subseteq A$ is true and since $A=B$, we can replace the $A$ on the right hand side or the left hand side by B. Hence, we find that $A\subseteq B\wedge B\subseteq A$ is true.\end{proof}

If $A,B,C$ are all Zermelo sets, $A\subseteq B$ and $B\subseteq C$, then $A\subseteq C$. This is the classical transitivity of subsets and also follows from our definition of subsets. Since we will need a few propositions to prove the transitivity of subsets in the setting of $\text{EZF}$, we will give it at the end of the next section (see proposition \ref{transitivity}).

\subsection{Propositions}\label{propositions}

We will now prove a few essential propositions. Here is an interesting property of the Zermelo sets.

\begin{proposition}\label{zermelo prop}$$\forall Y[ Y\in\mathcal{V}\Leftrightarrow (\forall A\in\mathcal{V})[A\in_1 Y \Rightarrow (\forall S\in\mathcal{V})(S\in_1 A\Rightarrow S\in_1 Y)]]$$\end{proposition}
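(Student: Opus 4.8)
The statement characterises the Zermelo sets as exactly those $Y$ that are closed under ``subsets of subset-members'': whenever $A\in_1 Y$ and $S\in_1 A$, one also has $S\in_1 Y$. I would prove the two directions separately.

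For $(\Rightarrow)$, suppose $Y\in\mathcal{V}$; take $A\in\mathcal{V}$ with $A\in_1 Y$ and $S\in\mathcal{V}$ with $S\in_1 A$. Since $Y,A\in\mathcal{V}$, the Subset Assignment axiom converts $A\in_1 Y$ into the classical $A\subseteq Y$ and $S\in_1 A$ into $S\subseteq A$. Ordinary transitivity of $\subseteq$ inside $\mathcal{V}$ (plain ZF, and also recoverable from Definition \ref{extsubset} and Proposition \ref{equiv}) gives $S\subseteq Y$, and a final application of Subset Assignment rewrites this as $S\in_1 Y$. The only point of care is that $Y$, $A$ and $S$ all lie in $\mathcal{V}$ so that Subset Assignment applies, which holds since the quantifiers restrict $A$ and $S$ to $\mathcal{V}$ and $Y\in\mathcal{V}$ by hypothesis.

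For $(\Leftarrow)$ I would argue contrapositively: from $Y\notin\mathcal{V}$ I must exhibit $A\in\mathcal{V}$ with $A\in_1 Y$ and $S\in\mathcal{V}$ with $S\in_1 A$ but $\neg(S\in_1 Y)$. A set in $\mathrm{EZF}\setminus\mathcal{V}$ has the form $Y=P^{-1}(X)$ for some $X\in\mathcal{V}^{*}$, and the Inverse Powerset axiom makes the subset-members of $Y$ precisely the elements of $X$; moreover each such element $A$ is itself Zermelo, so Subset Assignment reads ``$S\in_1 A$'' as ``$S\subseteq A$'' and ``$S\in_1 Y$'' as ``$S\in X$''. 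Hence what I need is exactly an $A\in X$ possessing a (classical) subset $S\notin X$, i.e.\ that a non-Zermelo $Y$ forces its $X$ to fail being closed under subsets. I would get this by establishing the contrapositive — if nonempty $X$ is closed under subsets then $P^{-1}(X)\in\mathcal{V}$ — first deriving from the Powerset and Inverse Powerset axioms together with Extended Extensionality that $P$ and $P^{-1}$ are mutually inverse on these sets (so $P^{-1}(X)\in\mathcal{V}$ iff $X$ is the classical powerset of some Zermelo set), and then recovering a candidate base set as $\bigcup X$ and checking $X=P(\bigcup X)$.

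I expect this last link in the converse to be the main obstacle. The forward direction and all the translation between $\in_1$ and $\subseteq$ are bookkeeping; the real content is understanding the internal structure of $P^{-1}(X)$ well enough to turn ``$Y$ non-Zermelo'' into a concrete failure of subset-closure of $X$. In particular it rests on the claim that a nonempty family closed under subsets is the powerset of its union — a statement I would want to verify carefully (likely via an $\in$-induction inside $\mathcal{V}$ using Foundation), since if it needed an additional hypothesis that is precisely where the argument, or the axiom system, would have to be reinforced.
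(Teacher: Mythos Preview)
Your $(\Rightarrow)$ argument matches the paper's: both use Subset Assignment together with Proposition~\ref{equiv} to rewrite $(\forall S\in\mathcal{V})(S\in_1 A\Rightarrow S\in_1 Y)$ as the classical $A\subseteq Y$, after which $A\in_1 Y\Rightarrow A\subseteq Y$ is immediate from Subset Assignment again. You simply unfold the transitivity step more explicitly.

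For $(\Leftarrow)$ the approaches diverge. The paper does not work contrapositively from $Y\notin\mathcal{V}$, nor does it analyse the internal structure of $P^{-1}(X)$; instead it runs a short contradiction argument, supposing the closure condition forces $Y\notin\mathcal{V}$, passing to the contrapositive form $Y\in\mathcal{V}\Rightarrow\neg(\text{closure})$, and observing that this conflicts with Subset Assignment.

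Your route has a concrete gap at precisely the point you flagged. The lemma ``a nonempty $X\in\mathcal{V}$ closed under subsets equals $P(\bigcup X)$'' is false: take $X=\{\emptyset,\{a\},\{b\}\}$, which is closed under subsets, yet $\bigcup X=\{a,b\}\notin X$, so $X\neq P(\{a,b\})$. For this $X$ the object $Y=P^{-1}(X)$ satisfies the right-hand closure condition---its subset-members are exactly $\emptyset,\{a\},\{b\}$, and every classical subset of each of these lies again in $X$---while no Zermelo set has precisely these three as its family of subsets. So your contrapositive cannot be completed: there is no $A\in X$ with a subset $S\notin X$ to exhibit. The obstruction is not a missing induction step but an actual counterexample to the auxiliary claim, so your instinct that the argument or the axiom system might need reinforcement at this point is well placed.
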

\begin{proof}($\Rightarrow$) Using the subset definition and proposition \ref{equiv}, $(\forall S\in\mathcal{V})(S\in_1 A\Rightarrow S\in_1 Y)$ can be written as $A\subseteq Y$. Since $Y\in \mathcal{V}$ then by the subset assignment axiom,
$A\in_1 Y \Rightarrow A\subseteq Y$ is true.

($\Leftarrow$) Suppose that $$(\forall A\in\mathcal{V})[A\in_1 Y \Rightarrow (\forall S\in\mathcal{V})(S\in_1 A\Rightarrow S\in_1 Y)]\Rightarrow Y\notin\mathcal{V}.$$ The negation of this statement gives us $$Y\in\mathcal{V}\Rightarrow \neg[(\forall A\in\mathcal{V})[A\in_1 Y \Rightarrow (\forall S\in\mathcal{V})(S\in_1 A\Rightarrow S\in_1 Y)]].$$ But again $\forall S(S\in_1 A\Rightarrow S\in_1 Y)$ can be written as $A\subseteq Y$ and thus we have $Y\in\mathcal{V}\Rightarrow \neg[(\forall A\in\mathcal{V})[A\in_1 Y\Rightarrow A\subseteq Y]]$ which contradicts the axiom of subset assignment which states that
$Y\in\mathcal{V}\Rightarrow (\forall A\in\mathcal{V})[(A\subseteq Y)\Leftrightarrow (A\in_1 Y)]$.
\end{proof}

This proposition can be seen as analogous to the characterization of the real numbers being complex numbers when they have a complex component of zero.

\begin{proposition}If $X\in\text{EZF}$, then $P(X)$ is a Zermelo set.\end{proposition}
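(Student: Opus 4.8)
The plan is to observe that this statement is essentially built into the Powerset axiom itself, so the work is mostly bookkeeping. First I would apply the Powerset axiom to the given $X$; this is legitimate because that axiom quantifies over \emph{all} $X$ and not merely over $\mathcal{V}$, so it applies whether $X$ is Zermelo or non-Zermelo (e.g. $X = P^{-1}(Z)$). The axiom then produces a set $Y \in \mathcal{V}$ satisfying $(\forall A\in\mathcal{V})[A\in_1 X \Leftrightarrow A\in Y]$. By definition this witness $Y$ is what we denote $P(X)$, and being a member of $\mathcal{V}$ is precisely what it means to be a Zermelo set; hence $P(X)$ is a Zermelo set.

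The one point that genuinely deserves an argument is that the notation $P(X)$ is unambiguous, i.e. that the witness $Y$ is unique. I would deduce this from Extended Extensionality together with Subset Assignment. Suppose $Y_1, Y_2 \in \mathcal{V}$ both witness the Powerset axiom for $X$. Then for every $A \in \mathcal{V}$ we have $A \in Y_1 \Leftrightarrow A \in_1 X \Leftrightarrow A \in Y_2$, so $Y_1$ and $Y_2$ have exactly the same elements. Since both are Zermelo, Subset Assignment gives $A \in_1 Y_i \Leftrightarrow \forall x(x \in A \Rightarrow x \in Y_i)$, and because $Y_1$ and $Y_2$ have the same elements these right-hand sides coincide, so $A \in_1 Y_1 \Leftrightarrow A \in_1 Y_2$ for all $A$. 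Extended Extensionality then yields $Y_1 = Y_2$.

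I do not expect any real obstacle here: the result is a direct consequence of the deliberate phrasing of the Powerset axiom, whose existential witness was declared to range over $\mathcal{V}$. The only subtlety worth flagging is that we are feeding a possibly non-Zermelo set $X$ into the axiom; this is allowed, and for such $X$ the relation $A \in_1 X$ is itself pinned down (by the Inverse Powerset axiom when $X = P^{-1}(Z)$), so $P(X)$ is still a well-defined Zermelo set. In particular I would not need Proposition \ref{zermelo prop} or the internal characterization of Zermelo sets for this argument, though one could alternatively route the uniqueness step through the equivalence in Proposition \ref{equiv}.
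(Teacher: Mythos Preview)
Your proposal is correct and follows the same idea as the paper: the result is immediate from the scope of the existential quantifier in the Powerset axiom, which forces the witness $Y=P(X)$ to lie in $\mathcal{V}$. The paper's proof is the one-line version of exactly this observation; your additional uniqueness argument via Subset Assignment and Extensionality is sound but is not needed here (the paper addresses well-definedness separately, later).
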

\begin{proof}This follows immediately from the scope of the existential quantifier of the powerset axiom $\forall X(\forall A\in\mathcal{V})(\exists Y\in\mathcal{V})[A\in_1 X\Leftrightarrow A\in Y]$.\end{proof}

This proposition permits us to able to say that $P(P^{-1}(X))$ is a Zermelo set and not a non-Zermelo set. Similarly, we can prove that $P^{-1}(P(P(X)))$ is also a Zermelo set.

We now proceed to prove a few useful propositions concerning the inverse
powerset.

\begin{proposition}\label{suppesinverse}Let $X\in\mathcal{V}$ and $X\in\mathcal{V}$, then \begin{center}$X\subseteq Y$ if and only if $P^{-1}(X)\subseteq P^{-1}(Y)$.\end{center}\end{proposition}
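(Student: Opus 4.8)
The statement asserts the equivalence $X\subseteq Y \iff P^{-1}(X)\subseteq P^{-1}(Y)$ for $X,Y\in\mathcal{V}$ (reading the second ``$X\in\mathcal{V}$'' as a typo for ``$Y\in\mathcal{V}$''). The plan is to unwind both sides through the defining axiom for the inverse powerset together with the extended subset definition (Definition \ref{extsubset}), treating the Zermelo and non-Zermelo cases carefully, since $P^{-1}(X)$ and $P^{-1}(Y)$ need not be Zermelo. First I would record the key translation: by the Inverse Powerset axiom, for $Y\in\mathcal{V}^*$ the set $P^{-1}(Y)$ satisfies $(\forall A\in\mathcal{V})[A\in Y \Leftrightarrow A\in_1 P^{-1}(Y)]$, and likewise for $X$. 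So subset-membership in $P^{-1}(Y)$ is literally the same predicate as ordinary membership in $Y$.

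Next I would handle the main case where at least one of $P^{-1}(X)$, $P^{-1}(Y)$ is non-Zermelo, so that the second clause of Definition \ref{extsubset} applies: $P^{-1}(X)\subseteq P^{-1}(Y)$ iff $(\forall A\in\mathcal{V})(A\in_1 P^{-1}(X)\Rightarrow A\in_1 P^{-1}(Y))$. Using the translation above, the right-hand side becomes $(\forall A\in\mathcal{V})(A\in X\Rightarrow A\in Y)$, which — since $X,Y\in\mathcal{V}$ — is exactly the first clause of Definition \ref{extsubset}, i.e.\ $X\subseteq Y$. This gives the equivalence directly in that case. For the remaining case, where both $P^{-1}(X)$ and $P^{-1}(Y)$ happen to be Zermelo (e.g.\ when $X,Y$ are themselves powersets), I would instead invoke the first clause of Definition \ref{extsubset} for $P^{-1}(X)\subseteq P^{-1}(Y)$, namely $(\forall A\in\mathcal{V})(A\in P^{-1}(X)\Rightarrow A\in P^{-1}(Y))$; here I would pass from ``$\in$'' to ``$\in_1$'' using Proposition \ref{zermelo prop} (or Proposition \ref{equiv} plus Subset Assignment) to again reduce to $(\forall A\in\mathcal{V})(A\in X\Rightarrow A\in Y)$, hence $X\subseteq Y$.

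The step I expect to be the main obstacle is the bookkeeping around which clause of the subset definition is active for $P^{-1}(X)$ and $P^{-1}(Y)$: the definition branches on membership in $\mathcal{V}$, and one must make sure the chosen branch is consistent on both sides and that the ``$\in$'' versus ``$\in_1$'' distinction is resolved correctly via the Subset Assignment axiom and Proposition \ref{zermelo prop}. A cleaner route, which I would adopt if it works, is to observe that in every case subset-membership in $P^{-1}(X)$ coincides with membership in $X$, so that $P^{-1}(X)\subseteq P^{-1}(Y)$ — whichever clause applies — is equivalent to the single condition $(\forall A\in\mathcal{V})(A\in X\Rightarrow A\in Y)$, and then note this condition is precisely $X\subseteq Y$ because $X,Y\in\mathcal{V}$. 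This collapses the two directions and the case split into one short argument; the only care needed is justifying that for a Zermelo $P^{-1}(X)$ its elements and its subset-members determine the same family (via Proposition \ref{zermelo prop}).
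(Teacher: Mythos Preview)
Your proposal is correct and follows essentially the same approach as the paper: both arguments use the Inverse Powerset axiom to translate $A\in_1 P^{-1}(X)$ into $A\in X$ (and likewise for $Y$), so that the $\in_1$-based subset condition on $P^{-1}(X),P^{-1}(Y)$ becomes the $\in$-based subset condition on $X,Y$. The paper's proof is actually less careful than yours --- it tacitly invokes only the second clause of Definition~\ref{extsubset} without checking that at least one of $P^{-1}(X),P^{-1}(Y)$ is non-Zermelo --- so your explicit case split (and the cleaner collapsed route you propose) is a genuine improvement in rigor, not a deviation in strategy.
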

\begin{proof}($\Rightarrow$) By assumption and by the subset definition, we have that $a\in X \Rightarrow a\in Y$. By the inverse powerset axiom, we now have that $a\in X\Leftrightarrow a\in_1 P^{-1}(X)$ and $a\in Y\Leftrightarrow a\in_1 P^{-1}(Y)$. Thus we have $a\in_1 P^{-1}(X)\Rightarrow a\in X$, $a\in X \Rightarrow a\in Y$ and $a\in Y\Rightarrow a\in_1 P^{-1}(Y)$. Therefore, by the transitivity of implication we find $a\in_1 P^{-1}(X)\Rightarrow a\in_1 P^{-1}(Y)$ which is $P^{-1}(X)\subseteq P^{-1}(Y)$ by the subset definition.

($\Leftarrow$) Similarly, by assumption and definition, we have $a\in_1 P^{-1}(X)\Rightarrow a\in_1 P^{-1}(Y)$ which can be written as $a\in X \Rightarrow a\in Y$ by the inverse powerset axioms. Thus we find $X\subseteq Y$.
\end{proof}

\begin{proposition}\label{suppes}Let $X\in\text{EZF}$ and $Y\in\text{EZF}$, then \begin{center}$X\subseteq Y$ if and only if $P(X)\subseteq P(Y)$.\end{center} \end{proposition}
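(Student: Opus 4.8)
The plan is to reduce \emph{both} sides of the biconditional to the single statement $(\forall A\in\mathcal{V})[A\in_1 X\Rightarrow A\in_1 Y]$, and then to split the left-hand side into cases according to whether $X$ and $Y$ are Zermelo.

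First I would deal with the right-hand side. Since $X,Y\in\text{EZF}$, the earlier proposition guarantees that $P(X)$ and $P(Y)$ are Zermelo sets, so the first clause of Definition \ref{extsubset} applies and $P(X)\subseteq P(Y)$ is equivalent to $(\forall A\in\mathcal{V})(A\in P(X)\Rightarrow A\in P(Y))$. Using the powerset axiom to rewrite $A\in P(X)$ as $A\in_1 X$ and $A\in P(Y)$ as $A\in_1 Y$, I obtain that $P(X)\subseteq P(Y)$ holds iff $(\forall A\in\mathcal{V})[A\in_1 X\Rightarrow A\in_1 Y]$.

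Next I would treat the left-hand side by cases. If at least one of $X,Y$ is non-Zermelo, the second clause of Definition \ref{extsubset} says outright that $X\subseteq Y$ iff $(\forall A\in\mathcal{V})(A\in_1 X\Rightarrow A\in_1 Y)$, which is exactly the condition extracted above, so the two are equivalent. If instead both $X$ and $Y$ are Zermelo, then $X\subseteq Y$ unfolds via the first clause to $\forall x(x\in X\Rightarrow x\in Y)$; by Proposition \ref{equiv} this is equivalent to $\forall S(S\subseteq X\Rightarrow S\subseteq Y)$, and by the subset assignment axiom each $S\subseteq X$ (resp.\ $S\subseteq Y$) with $S\in\mathcal{V}$ is equivalent to $S\in_1 X$ (resp.\ $S\in_1 Y$); hence again $X\subseteq Y$ is equivalent to $(\forall A\in\mathcal{V})[A\in_1 X\Rightarrow A\in_1 Y]$. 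Chaining these equivalences with the reduction of the right-hand side gives $X\subseteq Y\Leftrightarrow P(X)\subseteq P(Y)$ in all cases.

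The step I expect to require the most care is the Zermelo--Zermelo case, where the subset relation is phrased in terms of ordinary membership $\in$ rather than $\in_1$: the passage from $\in$ to $\in_1$ must go through Proposition \ref{equiv} together with the subset assignment axiom, and one should check that the quantifier over $S$ is correctly restricted to $\mathcal{V}$ so that the subset assignment axiom is applicable. The remaining manipulations are a routine unwinding of Definition \ref{extsubset} and the powerset axiom.
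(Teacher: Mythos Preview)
Your proposal is correct and uses the same ingredients as the paper's proof: the powerset axiom to pass between $A\in P(X)$ and $A\in_1 X$, the case split on whether $X,Y$ are both Zermelo, the subset assignment axiom, and Proposition~\ref{equiv} (equivalently, classical subset transitivity) in the Zermelo--Zermelo case. The only difference is organizational: you first isolate the pivot condition $(\forall A\in\mathcal{V})[A\in_1 X\Rightarrow A\in_1 Y]$ and show each side of the biconditional is equivalent to it, whereas the paper argues the two implications separately, running the case analysis inside each direction; your packaging is slightly cleaner but the underlying argument is identical.
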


\begin{proof}($\Rightarrow$) Suppose $X\in\mathcal{V}$, $Y\in\mathcal{V}$ and take $a\in P(X)$. By the inverse powerset axiom, $a\in P(X)$ is equivalent to $a\in_1 X$. By the subset assignment axiom $a\in_1 X$ is equivalent to $a\subseteq X$ and thus by the classical transitivity of subsets and our assumption $X\subseteq Y$, we find that $a\subseteq Y$. This is equivalent to $a\in_1 Y$ which is equivalent to $a\in P(Y)$. We just have shown that $a\in P(Y)\Rightarrow a\in P(Y)$ which is by definition $P(X)\subseteq P(Y)$.

Otherwise, if $X$ or $Y$ are not Zermelo, we have by definition that $a\in_1 X\Rightarrow a\in_1 Y$. By the powerset axiom this can be written as $a\in P(X)\Rightarrow a\in P(Y)$. Using the subset definition and since $P(X)$ and $P(Y)$ must be Zermelo, we also find $P(X)\subseteq P(Y)$.

($\Leftarrow$) Suppose $X\in\text{EZF}$, $Y\in\text{EZF}$ and take $a\in_1 X$, where $\in_1$ can be replaced by $\subseteq$ if $X$ is Zermelo. By the powerset axiom this is equivalent to $a\in P(X)$. By assumption, we therefore have $a\in P(X)\Rightarrow a\in P(Y)$. Since by the powerset axiom we have that $a\in P(X)\Leftrightarrow a\in_1 X$ and $a\in P(Y)\Leftrightarrow a\in_1 Y$, we find that $a\in_1 X\Rightarrow a\in_1 Y$. If $X$ and $Y$ are both Zermelo, then by the subset assignment axiom and the subset definition we find that $X\subseteq Y$. Otherwise, if $X$ and $Y$ are both not Zermelo, then by the subset definition we find $X\subseteq Y$.

\end{proof}

\begin{proposition}\label{inversesuppecor}Let $X\in\mathcal{V}$ and $Y\in\mathcal{V}$, then \begin{center}$X= Y$ if and only if $P^{-1}(X)=P^{-1}(Y)$.\end{center}\end{proposition}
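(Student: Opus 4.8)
The plan is to derive this as an immediate corollary of the antisymmetry of $\subseteq$ (Proposition \ref{subsetequals}) together with the order-isomorphism property of $P^{-1}$ already established in Proposition \ref{suppesinverse}. Since $X,Y\in\mathcal{V}$, the hypotheses of Proposition \ref{suppesinverse} are met, so we may freely pass between containments of $X,Y$ and containments of $P^{-1}(X),P^{-1}(Y)$ in both directions.

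For the forward direction, I would start from $X=Y$ and apply Proposition \ref{subsetequals} to get $X\subseteq Y$ and $Y\subseteq X$. Feeding each of these into Proposition \ref{suppesinverse} yields $P^{-1}(X)\subseteq P^{-1}(Y)$ and $P^{-1}(Y)\subseteq P^{-1}(X)$. A second application of Proposition \ref{subsetequals} then gives $P^{-1}(X)=P^{-1}(Y)$. Note that this argument does not care whether $P^{-1}(X)$ and $P^{-1}(Y)$ happen to be Zermelo or non-Zermelo, since Propositions \ref{subsetequals} and \ref{suppesinverse} are stated without that restriction; so no case split is needed.

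For the converse, I would run the same chain backwards: from $P^{-1}(X)=P^{-1}(Y)$, Proposition \ref{subsetequals} gives the two inclusions $P^{-1}(X)\subseteq P^{-1}(Y)$ and $P^{-1}(Y)\subseteq P^{-1}(X)$; the ``($\Leftarrow$)'' half of Proposition \ref{suppesinverse} converts these to $X\subseteq Y$ and $Y\subseteq X$; and Proposition \ref{subsetequals} once more yields $X=Y$.

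There is essentially no hard step here — the only thing to be careful about is the implicit requirement that $P^{-1}(X)$ and $P^{-1}(Y)$ are actually defined, i.e. that $X,Y\in\mathcal{V}^{*}$ (the inverse powerset axiom excludes the empty set); if one wants the statement to hold literally for all of $\mathcal{V}$ one should either restrict to $\mathcal{V}^{*}$ or adopt a convention for $P^{-1}(\emptyset)$. Modulo that bookkeeping point, the proof is just two invocations of Proposition \ref{subsetequals} sandwiching one invocation of Proposition \ref{suppesinverse} in each direction.
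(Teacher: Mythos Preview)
Your proof is correct and follows essentially the same route as the paper: both arguments sandwich Proposition~\ref{suppesinverse} between two applications of Proposition~\ref{subsetequals}, the only cosmetic difference being that the paper runs the biconditionals simultaneously rather than splitting into forward and converse directions. Your remark about the $\mathcal{V}^{*}$ restriction in the inverse powerset axiom is a fair bookkeeping observation that the paper does not address.
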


\begin{proof}By proposition \ref{subsetequals}, $X=Y$ if and only if $X\subseteq Y\wedge X\subseteq Y$. Since by proposition \ref{suppesinverse}, we have that $X\subseteq Y\Leftrightarrow P^{-1}(X)\subseteq P^{-1}(Y)$, we find that $$X\subseteq Y\wedge X\subseteq Y\Leftrightarrow P^{-1}(X)\subseteq P^{-1}(Y)\wedge P^{-1}(Y)\subseteq P^{-1}(X).$$ Hence, by proposition \ref{subsetequals} we get $X= Y\Leftrightarrow P^{-1}(X)= P^{-1}(Y).$
\end{proof}

\begin{proposition}\label{suppescor}Let $X\in\text{EZF}$ and $Y\in\text{EZF}$, then \begin{center} $X= Y$ if and only if $P(X)=P(Y)$\end{center} \end{proposition}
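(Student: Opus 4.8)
The plan is to mimic exactly the proof of Proposition \ref{inversesuppecor}, but now using Proposition \ref{suppes} in place of Proposition \ref{suppesinverse}, so that the argument works for arbitrary $X,Y\in\text{EZF}$ rather than just Zermelo sets. The key observation is that all three ingredients are already available: Proposition \ref{subsetequals} characterizes equality via mutual inclusion for any sets, and Proposition \ref{suppes} gives $X\subseteq Y\Leftrightarrow P(X)\subseteq P(Y)$ for $X,Y\in\text{EZF}$.

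First I would invoke Proposition \ref{subsetequals} to rewrite $X=Y$ as $X\subseteq Y\wedge Y\subseteq X$. Next, applying Proposition \ref{suppes} to each conjunct, $X\subseteq Y$ is equivalent to $P(X)\subseteq P(Y)$ and $Y\subseteq X$ is equivalent to $P(Y)\subseteq P(X)$; hence $X\subseteq Y\wedge Y\subseteq X$ is equivalent to $P(X)\subseteq P(Y)\wedge P(Y)\subseteq P(X)$. Finally, since $P(X)$ and $P(Y)$ are themselves sets (in fact Zermelo sets, by the proposition that $P$ of anything in $\text{EZF}$ is Zermelo), a second application of Proposition \ref{subsetequals} turns $P(X)\subseteq P(Y)\wedge P(Y)\subseteq P(X)$ into $P(X)=P(Y)$. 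Chaining these equivalences yields $X=Y\Leftrightarrow P(X)=P(Y)$.

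I do not expect a genuine obstacle here, since the statement is essentially the "dual" of Proposition \ref{inversesuppecor} and every lemma it needs has been proved. The only point requiring a little care is that Proposition \ref{suppes} is stated for $X,Y\in\text{EZF}$, so I should make sure the hypothesis of the current proposition ($X,Y\in\text{EZF}$) is exactly what is needed to apply it in both directions — which it is. One could also note, for the reader's benefit, that unlike Proposition \ref{inversesuppecor} no restriction to $\mathcal{V}$ is required precisely because $P$ (as opposed to $P^{-1}$) is total on $\text{EZF}$ and respects the extended subset relation on all of $\text{EZF}$ by Proposition \ref{suppes}.

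\begin{proof}
By Proposition \ref{subsetequals}, $X=Y$ if and only if $X\subseteq Y\wedge Y\subseteq X$. By Proposition \ref{suppes}, $X\subseteq Y\Leftrightarrow P(X)\subseteq P(Y)$ and likewise $Y\subseteq X\Leftrightarrow P(Y)\subseteq P(X)$, so
$$X\subseteq Y\wedge Y\subseteq X\Leftrightarrow P(X)\subseteq P(Y)\wedge P(Y)\subseteq P(X).$$
Since $P(X)$ and $P(Y)$ are Zermelo sets, Proposition \ref{subsetequals} applies again and gives $P(X)\subseteq P(Y)\wedge P(Y)\subseteq P(X)\Leftrightarrow P(X)=P(Y)$. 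Combining these equivalences, $X=Y\Leftrightarrow P(X)=P(Y)$.
\end{proof}
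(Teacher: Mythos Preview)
Your proof is correct and follows essentially the same route as the paper: invoke Proposition~\ref{subsetequals} to pass between equality and mutual inclusion, and Proposition~\ref{suppes} to translate inclusions through $P$. The paper's version is terser (it omits the explicit second application of Proposition~\ref{subsetequals} and the remark that $P(X),P(Y)$ are Zermelo), but the argument is the same.
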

\begin{proof}By proposition \ref{subsetequals} $X\subseteq Y \wedge Y\subseteq X\Leftrightarrow X=Y$, thus, from proposition \ref{suppes} we deduce
that $X= Y\Leftrightarrow P(X)=P(Y)$.
\end{proof}

\begin{proposition}\label{inverse}If $X\in\mathcal{V}$, then $P(P^{-1}(X))=X$.\end{proposition}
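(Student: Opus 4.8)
## Proof Proposal

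The plan is to unwind the two defining axioms — the powerset axiom and the inverse powerset axiom — and then apply extensionality. The key observation is that both axioms are biconditional characterizations of membership, so composing them should collapse to a triviality, modulo the subtlety of which membership relation (``$\in$'' versus ``$\in_1$'') is in play at each stage.

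First I would fix $X \in \mathcal{V}$ and set $Z = P^{-1}(X)$, which exists by the inverse powerset axiom (we need $X \in \mathcal{V}^*$; if $X = \emptyset$ this has to be handled separately, or one notes $P^{-1}(\emptyset)$ is treated elsewhere — this is a possible gap I would flag). By the inverse powerset axiom applied to $X$, we have for all $A \in \mathcal{V}$ that $A \in X \Leftrightarrow A \in_1 Z$. Next, $P(Z)$ exists as a Zermelo set by the powerset axiom, and by that axiom, for all $A \in \mathcal{V}$, $A \in_1 Z \Leftrightarrow A \in P(Z)$. Chaining these two biconditionals gives $A \in X \Leftrightarrow A \in P(Z)$ for all $A \in \mathcal{V}$; that is, $X$ and $P(Z)$ have exactly the same elements.

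Then I would conclude $X = P(Z)$, i.e. $P(P^{-1}(X)) = X$. Since both $X$ and $P(Z)$ are Zermelo sets, I can invoke the classical extensionality axiom directly on the element-membership equivalence just derived — or, to stay within the paper's framework, use Proposition \ref{equiv} to pass from ``same elements'' to ``same subset-members'' and then apply the extended extensionality axiom. Either route closes the argument.

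The main obstacle I anticipate is purely bookkeeping: making sure the quantifier ``$(\forall A \in \mathcal{V})$'' ranges correctly in both axioms and that the composed biconditional is genuinely over all of $\mathcal{V}$ (so that extensionality applies), together with the edge case $X = \emptyset$, which the inverse powerset axiom's restriction to $\mathcal{V}^*$ does not directly cover. I would either restrict the statement, cite an earlier convention, or check that $P(P^{-1}(\emptyset)) = \emptyset$ holds by whatever convention the paper adopts for $P^{-1}(\emptyset)$.
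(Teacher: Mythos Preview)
Your proposal is correct and is essentially the paper's own argument: the paper also unwinds the inverse powerset axiom to get $x\in X\Leftrightarrow x\in_1 P^{-1}(X)$ and the powerset axiom to get $x\in_1 P^{-1}(X)\Leftrightarrow x\in P(P^{-1}(X))$, then concludes equality. The only cosmetic difference is that the paper splits the chained biconditional into two separate inclusions $X\subseteq P(P^{-1}(X))$ and $P(P^{-1}(X))\subseteq X$ and finishes via Proposition~\ref{subsetequals}, whereas you compose the biconditionals in one step and invoke extensionality directly; and your observation about the $X=\emptyset$ edge case (forced by the restriction to $\mathcal{V}^*$ in the inverse powerset axiom) is a genuine gap that the paper's proof silently passes over.
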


\begin{proof}
Take $x\in X$. By the inverse powerset axiom, we have that there exists a set $P^{-1}(X)$ such that $x\in_1 P^{-1}(X)$. By the powerset axiom we have that there exists a set $P(P^{-1}(X))$ such that $x\in P(P^{-1}(X))$, Therefore, we have that $x\in X\Rightarrow x\in P(P^{-1}(X))$ which means by the subset definition that $X\subseteq P(P^{-1}(X))$ since $P(P^{-1}(X))$ is Zermelo.

Take $y\in P(P^{-1}(X))$, then by the powerset axiom we have that $y\in_1 P^{-1}(X)$ and by the inverse powerset axiom we get $y\in X$. Since $P(P^{-1}(X))$ is Zermelo, we find using the subset definition $P(P^{-1}(X))\subseteq X$. By proposition \ref{subsetequals}, we can conclude that $P(P^{-1}(X))=X$.
\end{proof}

\begin{proposition}\label{inverse2}If $X\in\text{EZF}$, then $P^{-1}(P(X))=X$.\end{proposition}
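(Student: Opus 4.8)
The plan is to show that $P^{-1}(P(X))$ and $X$ have exactly the same subset-members and then invoke the axiom of extended extensionality; alternatively, one can apply $P$ to both sides and use the injectivity of $P$ on $\text{EZF}$ established above.

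First, a well-definedness check. By the proposition that $P(X)$ is a Zermelo set for every $X\in\text{EZF}$, we have $P(X)\in\mathcal{V}$, and $P(X)\neq\emptyset$: indeed $\emptyset\in_1 X$ when $X$ is Zermelo (subset assignment axiom, vacuously), while $P\bigl(P^{-1}(Z)\bigr)=Z\in\mathcal{V}^{*}$ by Proposition \ref{inverse} when $X=P^{-1}(Z)$ is non-Zermelo. Hence $P(X)\in\mathcal{V}^{*}$, so the inverse powerset axiom legitimately produces $P^{-1}(P(X))$, and $P^{-1}(P(X))\in\text{EZF}$ by the second clause of the definition of $\text{EZF}$.

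Now fix any $A\in\mathcal{V}$. Applying the inverse powerset axiom with $Y:=P(X)$ gives $A\in_1 P^{-1}(P(X))\Leftrightarrow A\in P(X)$, and the powerset axiom gives $A\in P(X)\Leftrightarrow A\in_1 X$. Chaining these equivalences yields $A\in_1 P^{-1}(P(X))\Leftrightarrow A\in_1 X$ for every $A\in\mathcal{V}$, and the axiom of extended extensionality then forces $P^{-1}(P(X))=X$.

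I do not anticipate a genuine obstacle. The only points requiring care are: (i) the well-definedness step, i.e. checking $P(X)\neq\emptyset$ so that the inverse powerset axiom, stated only over $\mathcal{V}^{*}$, applies; and (ii) noting that the equivalences delivered by the powerset and inverse powerset axioms are quantified over $A\in\mathcal{V}$, which is exactly the range the (restricted) extensionality axiom needs. If one wishes to avoid extensionality, Proposition \ref{inverse} applied to the Zermelo set $P(X)$ gives $P\bigl(P^{-1}(P(X))\bigr)=P(X)$; since $P^{-1}(P(X))$ and $X$ both lie in $\text{EZF}$, Proposition \ref{suppescor} (injectivity of $P$ on $\text{EZF}$) then delivers $P^{-1}(P(X))=X$.
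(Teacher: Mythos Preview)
Your argument is correct, and both routes you give are more economical than the paper's. The paper proceeds by a case split on whether $X$ is Zermelo: for non-Zermelo $X=P^{-1}(X')$ it manipulates the identity $P^{-1}(X')=P^{-1}(X')$ through Propositions~\ref{suppescor}, \ref{inversesuppecor} and~\ref{inverse}; for Zermelo $X$ it establishes the two inclusions $X\subseteq P^{-1}(P(X))$ and $P^{-1}(P(X))\subseteq X$ via the subset definition (with a further sub-case split on whether $P^{-1}(P(X))$ is Zermelo) and then invokes Proposition~\ref{subsetequals}. Your main argument collapses all of this by chaining the two axioms to obtain the $\in_1$-equivalence directly and appealing to extended extensionality, which is exactly what Proposition~\ref{subsetequals} rests on anyway. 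Your alternative---apply Proposition~\ref{inverse} to the Zermelo set $P(X)$ and then use the injectivity of $P$ from Proposition~\ref{suppescor}---is also valid and non-circular, and in fact is a uniform, one-line version of what the paper does only in the non-Zermelo case. The one delicate point you flag in (ii) is real: the Extended Extensionality axiom as written quantifies $S$ unrestrictedly, while your equivalence is only over $S\in\mathcal{V}$; the paper, however, makes the same tacit identification (see the proof of Proposition~\ref{subsetequals}), so you are operating within the paper's conventions, and your alternative route sidesteps the issue entirely.
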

\begin{proof}If $X$ is non-Zermelo, then there is a Zermelo set $X'$ such that $X=P^{-1}(X')$. Taking $P^{-1}(X')=P^{-1}(X')$ and applying $P$ on both side using proposition \ref{suppescor} we have $P(P^{-1}(X'))=P(P^{-1}(X'))$. By proposition \ref{inversesuppecor} we can write $P^{-1}(P(P^{-1}(X')))=P^{-1}(P(P^{-1}(X')))$. This becomes $P^{-1}(P(P^{-1}(X')))=P^{-1}(X')$ by proposition \ref{inverse} and since $X=P^{-1}(X')$, we find $P^{-1}(P(X))=X$.

Let $X$ be a Zermelo set and take $z\subseteq X$. Then by the powerset axiom, $z\in P(X)$ and by the inverse powerset axiom $z\in_1 P^{-1}(P(X))$. If $P^{-1}(P(X))$ is Zermelo, then $z\subseteq P^{-1}(P(X))$ and thus we have $\forall z(z\subseteq X\Rightarrow z\subseteq P^{-1}(P(X)))$. By proposition \ref{equiv}, this is equivalent to $\forall x(x\in X\Rightarrow z\in P^{-1}(P(X)))$ and therefore we have $X\subseteq P^{-1}(P(X))$. If $P^{-1}(P(X))$ is non-Zermelo then we have $z\subseteq X\Rightarrow z\in_1 P^{-1}(P(X))$ and can conclude by the subset definition that $X\subseteq P^{-1}(P(X))$.

Let $X$ be a Zermelo set and take $z\in_1 P^{-1}(P(X))$. By the powerset axiom this is equivalent to $z\in P(P^{-1}(P(X)))$ and using proposition \ref{inverse} we find $z\in P(X)$. Thus, by the powerset axiom, this becomes $z\in_1 X$. Therefore, $z\in_1 P^{-1}(P(X))\Rightarrow z\in_1 X$ and depending on whether $P^{-1}(P(X))$ is Zermelo or not, we use proposition \ref{equiv} or the subset definition to conclude that $P^{-1}(P(X))\subseteq X$.

Since $X\subseteq P^{-1}(P(X))$ and $P^{-1}(P(X))\subseteq X$, we have by proposition \ref{subsetequals}, that $P^{-1}(P(X))=X$.\end{proof}

\begin{proposition}[Uniqueness]There exists a unique set $Y$ of $\text{EZF}$ satisfying $P(Y)=X$.\end{proposition}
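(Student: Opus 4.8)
The plan is to get existence and uniqueness as one-line consequences of results already in hand, after first pinning down which $X$ can possibly occur. Since every powerset is a Zermelo set (and moreover every powerset contains $\emptyset$ as a member, so $P(Y)\neq\emptyset$), the equation $P(Y)=X$ can have a solution only when $X$ is a non-empty Zermelo set; this is precisely why the inverse powerset axiom is stated over $\mathcal{V}^{*}$. So I would read the statement as: for each such $X$ there is exactly one $Y\in\text{EZF}$ with $P(Y)=X$ (and for other $X$ there is none, so uniqueness holds vacuously).

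For existence, assume $X\in\mathcal{V}^{*}$. The inverse powerset axiom then provides the set $P^{-1}(X)\in\text{EZF}$, and Proposition \ref{inverse} gives $P(P^{-1}(X))=X$. Hence $Y=P^{-1}(X)$ is a witness.

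For uniqueness, suppose $Y_{1},Y_{2}\in\text{EZF}$ both satisfy $P(Y_{1})=X=P(Y_{2})$. Then $P(Y_{1})=P(Y_{2})$, and Proposition \ref{suppescor} (which states $X=Y$ iff $P(X)=P(Y)$ for sets of $\text{EZF}$) immediately forces $Y_{1}=Y_{2}$.

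There is no real obstacle here beyond the bookkeeping about the domain of $X$: once one observes that only non-empty Zermelo sets are eligible, existence is Proposition \ref{inverse} applied to $P^{-1}(X)$ and uniqueness is a direct appeal to the injectivity of $P$ on $\text{EZF}$ recorded in Proposition \ref{suppescor}. If one prefers to keep the statement quantified over all $X$, the same two lines establish "there is at most one such $Y$, and at least one when $X$ is a non-empty Zermelo set," which is the only sensible content the statement can carry given that $P$ always outputs a non-empty Zermelo set.
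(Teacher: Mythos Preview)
Your argument is correct. For uniqueness you invoke Proposition~\ref{suppescor} (injectivity of $P$ on $\text{EZF}$) directly, whereas the paper instead applies $P^{-1}$ to both sides of $P(Y_1)=P(Y_2)$ and then uses Proposition~\ref{inverse2} to cancel; these are equivalent one-line arguments, since \ref{suppescor} is exactly the packaged form of that cancellation. The paper's proof actually treats only uniqueness and does not spell out existence or the restriction on $X$; your added remarks that $P(Y)$ is always a non-empty Zermelo set (so only $X\in\mathcal{V}^{*}$ is eligible) and that $Y=P^{-1}(X)$ witnesses existence via Proposition~\ref{inverse} make the statement precise and complete, which is a small improvement over the original.
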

\begin{proof}Suppose there are two sets satisfying $P(Y)=X$, then $P(Y_1)=X=P(Y_2)$. Applying the operator $P^{-1}$ on both sides of $P(Y_1)=P(Y_2)$ we get $Y_1=Y_3$ by proposition \ref{inverse2}, a contradiction.
\end{proof}

\begin{proposition}\label{transitivity}For $A,B,C$ sets of $\text{EZF}$, If $A\subseteq B$ and $B\subseteq C$, then $A\subseteq C$. \end{proposition}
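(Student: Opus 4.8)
The plan is to reduce the statement to the classical transitivity of $\subseteq$ among Zermelo sets by applying the powerset operator, rather than carrying out a case analysis over which of $A$, $B$, $C$ lie in $\mathcal{V}$. The facts I would use are: for every $X\in\text{EZF}$ the set $P(X)$ is Zermelo (the proposition stating exactly this, proved just after Proposition~\ref{zermelo prop}); among Zermelo sets the relation $\subseteq$ of Definition~\ref{extsubset} coincides with the classical subset relation, which is transitive (as noted at the end of Section~\ref{section subsets}); and $X\subseteq Y\Leftrightarrow P(X)\subseteq P(Y)$ for $X,Y\in\text{EZF}$ (Proposition~\ref{suppes}).

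Given $A\subseteq B$ and $B\subseteq C$ with $A,B,C\in\text{EZF}$, I would first apply Proposition~\ref{suppes} to each hypothesis, obtaining $P(A)\subseteq P(B)$ and $P(B)\subseteq P(C)$. Since $P(A)$, $P(B)$ and $P(C)$ are Zermelo, classical transitivity of the subset relation gives $P(A)\subseteq P(C)$. Applying Proposition~\ref{suppes} once more, this time in the reverse direction and using $A,C\in\text{EZF}$, yields $A\subseteq C$, which is the desired conclusion.

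I do not expect a genuine obstacle here; the one point to be careful about is that Proposition~\ref{suppes} is a true biconditional valid for arbitrary members of $\text{EZF}$, so no separate handling of the Zermelo versus non-Zermelo sub-cases for $A$, $B$, $C$ is required. For completeness one could instead unfold Definition~\ref{extsubset} directly and check transitivity in each configuration of memberships in $\mathcal{V}$; this is elementary but longer, and the mixed configurations — for instance $A$ and $C$ Zermelo but $B$ non-Zermelo — still call for Proposition~\ref{equiv} to translate between the $\in$ and $\in_1$ formulations of $\subseteq$, so the route through $P$ is the cleaner one.
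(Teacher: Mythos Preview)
Your proposal is correct and follows essentially the same strategy as the paper: pass to powersets, use classical transitivity among Zermelo sets, and descend. The paper differs only in two cosmetic respects: it separates out the case where $A,B,C$ are all Zermelo (handled directly), and in the mixed case it descends from $P(A)\subseteq P(C)$ via Proposition~\ref{suppesinverse} followed by Proposition~\ref{inverse2}, rather than invoking the backward implication of Proposition~\ref{suppes} as you do. Your uniform treatment and direct use of the biconditional in Proposition~\ref{suppes} is slightly cleaner, but the underlying idea is the same.
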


\begin{proof}If $A,B,C$ are Zermelo, we have that $A\subseteq B$ implies $\forall x(x\in A\Rightarrow x\in B)$.  Similarly,  $B\subseteq C$ implies $\forall x(x\in B\Rightarrow x\in C)$ and thus, by transitivity of implication $\forall x(x\in A\Rightarrow x\in C)$ which means that $A\subseteq C$.

If at least one set of $A,B,C$ is non-Zermelo, then we apply the powerset operator on each side of $A\subseteq B$ and $B\subseteq C$. Thus we have $P(A)\subseteq P(B)$ and $P(B)\subseteq P(C)$ and since $P(A)$, $P(B)$ and $P(C)$ are Zermelo sets, we find $P(A)\subseteq P(C)$, by classical transitivity. By proposition \ref{suppesinverse}, $P(A)\subseteq P(C)$ is equivalent to $P^{-1}(P(A))\subseteq P^{-1}(P(C))$ which is equivalent to $A\subseteq C$ by proposition \ref{inverse2}.

\end{proof}

Take $X$ to be a Zermelo set and take $Y\subseteq P(X)$ such that $Y$ is a Zermelo set and $Y\neq P(Y')$ for all Zermelo sets $Y'$, then by using proposition \ref{suppesinverse} and proposition \ref{inverse2}, we have that $P^{-1}(Y)\subseteq X$ where $P^{-1}(Y)$ is not a Zermelo set. One way to look at this result is to consider that in the $\text{EZF}$, we can take `fractions' of a set $X$. This is similar to the case of taking a fraction of an integer or like having $\frac{1}{2}\leq 2$. It is important to note that this subset is not taken into account when we construct the powerset of $X$. For $P^{-1}(Y)$ to become an element of $P(X)$ we would need to be able to write $P^{-1}(Y)\in_1 X$, but since $P^{-1}(Y)$ is non-Zermelo, this cannot be done with the subset assignment axiom.

We now have to extend the notion of cardinality. For some sets, the usual cardinality definition is adequate, for example
$|P^{-1}(P(\mathbb{N}))|= \aleph_0$ since by proposition \ref{inverse2} we have that $P^{-1}(P(\mathbb{N}))=\mathbb{N}$. For some finite sets we have a nice inverse, for example we have
$|P^{-1}(\{\emptyset,\{a\},\{b\},\{c\},\{a,b\},\{a,c\},\{b,c\},\{a,b,c\}\})|=3$ since $P^{-1}(\{\emptyset,\{a\},\{b\},\{c\},\{a,b\},\{a,c\},\{b,c\},\{a,b,c\}\})=\{a,b,c\}$. For sets such as $P^{-1}(\{1,2,3,4,5\})$ and $P^{-1}(\mathbb{N})$ it is not clear what should be its cardinality. This is what we will investigate in the following sections. In a few words, in the case of a set $X$ which cannot be written as $P(X')$ for any Zermelo set $X'$, we will have to make a choice between the two statements $|P^{-1}(X)|=|X|$ and $|P^{-1}(X)|<|X|$. In essence, the first choice allows the continuum hypothesis to be true and the second choice makes the continuum hypothesis false.

\section{Proving the Continuum Hypothesis}

By extending Zermelo-Fraenkel set theory with the concept of the inverse powerset, we now have a setting where the truth or falsity of the continuum hypothesis can be decided. This will be done by restricting $\text{EZF}$ and by giving two extensions of the definition of cardinality. The extended definitions must give the same result for Zermelo sets and must apply to non-Zermelo sets such as $P^{-1}(\{1,2,3,4,5\})$ and $P^{-1}(X)$ when $X$ cannot be written as $P(X')$ for any Zermelo set $X'$. As seen in the case of complex numbers, there are many ways to define a norm. We will give three ways to extend the definition of cardinality, in particular the first one will induce the continuum hypothesis and the other two will induce its falsity. This can also be seen as giving an explicit model of $\text{ZF}$ in which the continuum hypothesis is true and giving a model in which the continuum hypothesis is false.

In this section we will prove the continuum hypothesis in the system of $\text{EZF}$ with the classical axiom of choice, that is the axiom of choice over the Zermelo sets only. But first, here are a few definitions which will simplify our notation.

\begin{definition}A Zermelo set $X$ is said to be a powered Zermelo set if and only if there is a Zermelo set $X'$ such that $X=P(X')$. A Zermelo set which is not a powered Zermelo set will be said to be a non-powered Zermelo set.
\end{definition}

In the following, when the powerset operator appears $n$ times in the expression $P(P(P(...P(X))))$ we will denote this expression by $P^n(X)$. It is also understood that $P^0(X)=X$.

\begin{definition}A powered Zermelo set $X$ is said to be a n-powered Zermelo set if and only if $X=P^n(X')$ and $X'$ is a non-powered Zermelo set.
\end{definition}

It is important to note that for each positive integer $n$ there are many proper subsets of $P^n(\mathbb{N})$ which are non-powered Zermelo sets. In fact, there are probably many more non-powered sets than powered sets.

Since in this section we add the classical axiom of choice to $\text{EZF}$, this implies that the class of classical cardinal number is totally ordered. This way, for each Zermelo set $X$, there exist a unique integer $k\geq 1$ such that $|P^{k-1}(\mathbb{N})|<|X|\leq |P^{k}(\mathbb{N})|$.

\begin{definition}Let $X$ be a Zermelo set, then if $|P^{k-1}(\mathbb{N})|<|X|\leq |P^{k}(\mathbb{N})|$ for some integer $k\geq 1$, then we say that the Zermelo set $X$ is of degree $k$. If $|X|\leq |\mathbb{N}|$ then we say that $X$ is of degree 0.
\end{definition}

We now give an extended definition of cardinality which will induce the continuum hypothesis in the setting of $\text{EZF}$. We will use the notation $|X|$ to denote the classical cardinality of a set $X$.
\pagebreak

\begin{definition}[$CH$-cardinality]\label{chcardinalitydef}Let U be a set of $\text{EZF}$. If $U$ is a Zermelo set then
$$|U|_{ch}=|U|$$
\noindent Otherwise, if $U$ is a non-Zermelo set, let $U$ be written as $P^{-1}(X)$ with $X$ a non-powered Zermelo set of degree $k$. Then,

\noindent if $k\geq 1$, we define
\noindent $$|U|_{ch}=|P^{-1}(X)|_{ch}=|P^{k-1}(\mathbb{N})|,$$
if $k=0$, we define
$$|U|_{ch}=|P^{-1}(X)|_{ch}=|\mathbb{N}|.$$
\end{definition}

Note that if $U$ is a Zermelo set, the definitions reduce to the usual definition of cardinality. We will refer to this extension of cardinality as the \textit{CH-cardinality}. Based on the relation `$\leq$' for the classical cardinal numbers, we extend this relation to non-Zermelo sets.

\begin{definition}\label{smallch}Let U, U' be sets of $\text{EZF}$. Let $|U|_{ch}=|Y|$ and $|U|_{ch}=|Y'|$. Then,

\begin{center}$|U|_{ch}\leq |U'|_{ch}$ if and only if $|Y|\leq |Y'|$.\end{center}
\end{definition}

We now prove the extension of the Schroeder-Bernstein theorem.

\begin{proposition}\label{Bernstein}If $|U|_{ch}\leq |U'|_{ch}$ and $|U'|_{ch}\leq |U|_{ch}$ then $|U|_{ch}=|U'|_{ch}$.\end{proposition}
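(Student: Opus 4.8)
The plan is to reduce the extended Schroeder--Bernstein statement to the classical Schroeder--Bernstein theorem for the ordinary cardinal numbers, which holds because we are working in $\text{EZF}$ together with the classical axiom of choice. The key observation is that Definition~\ref{smallch} is stated entirely in terms of the classical relation $\leq$ on the classical cardinalities $|Y|$ and $|Y'|$, where $Y$ and $Y'$ are the Zermelo sets furnished by the $CH$-cardinality of $U$ and $U'$ respectively. So almost no work about non-Zermelo sets is actually required.

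First I would unwind the definitions: by Definition~\ref{chcardinalitydef}, for any set $U$ of $\text{EZF}$ there is a Zermelo set $Y$ (namely $U$ itself if $U$ is Zermelo, or $P^{k-1}(\mathbb{N})$, or $\mathbb{N}$, according to the degree of the non-powered Zermelo base of $U$) such that $|U|_{ch} = |Y|$; fix such a $Y$ for $U$ and such a $Y'$ for $U'$. Then by Definition~\ref{smallch}, the hypothesis $|U|_{ch}\leq |U'|_{ch}$ unpacks to $|Y|\leq |Y'|$ and the hypothesis $|U'|_{ch}\leq |U|_{ch}$ unpacks to $|Y'|\leq |Y|$. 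Now $Y$ and $Y'$ are Zermelo sets, so $|Y|$ and $|Y'|$ are ordinary cardinal numbers, and the classical Schroeder--Bernstein theorem (provable already in ZF) gives $|Y| = |Y'|$, i.e.\ there is a bijection between $Y$ and $Y'$. Finally I would translate $|Y| = |Y'|$ back through the definitions: since $|U|_{ch} = |Y|$ and $|U'|_{ch} = |Y'|$ and $|Y| = |Y'|$, we conclude $|U|_{ch} = |U'|_{ch}$, which is what was to be shown.

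One small point of care: Definition~\ref{smallch} only defines the relation $\leq$ on $CH$-cardinalities, so the statement ``$|U|_{ch} = |U'|_{ch}$'' in the conclusion should be read as equality of the underlying classical cardinals $|Y| = |Y'|$ (equivalently, $|U|_{ch}\leq|U'|_{ch}$ and $|U'|_{ch}\leq|U|_{ch}$ both hold, which is then immediate and also shows the notation is well-behaved). I would note this interpretation explicitly so the equality symbol is unambiguous; with that reading the proof is essentially a one-line appeal to classical Schroeder--Bernstein after unpacking Definitions~\ref{chcardinalitydef} and~\ref{smallch}. The main (and only real) obstacle is thus bookkeeping rather than mathematics: making sure the representatives $Y$, $Y'$ are chosen consistently and that one is genuinely allowed to invoke the classical theorem, which is fine because $Y$ and $Y'$ are Zermelo sets living in the Von Neumann universe $\mathcal{V}$ over which the classical axioms (including choice) are assumed to hold.
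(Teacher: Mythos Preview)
Your proof is correct and follows exactly the same approach as the paper: unpack Definition~\ref{smallch} to get $|Y|\leq|Y'|$ and $|Y'|\leq|Y|$ for the Zermelo representatives, apply the classical Schroeder--Bernstein theorem, and conclude $|U|_{ch}=|U'|_{ch}$. Your additional remarks about the choice of representatives and the meaning of the equality are reasonable clarifications but not needed for the paper's purposes.
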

\begin{proof}Let $|U|_{ch}=|Y|$ and $|U|_{ch}=|Y'|$, then by definition \ref{smallch} we have that $|Y|\leq |Y'|$ and $|Y'|\leq |Y|$ which implies by the Shroeder-Bernstein theorem that $|Y|=|Y'|$. Hence, we have $|U|_{ch}=|U'|_{ch}$.
\end{proof}

\begin{proposition}If $|U|_{ch}\leq |V|_{ch}$ and $|V|_{ch} \leq |W|_{ch}$ then $|U|_{ch}\leq |W|_{ch}$.\end{proposition}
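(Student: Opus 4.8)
The statement to prove is transitivity of the relation $\leq$ on $CH$-cardinalities, namely: if $|U|_{ch}\leq |V|_{ch}$ and $|V|_{ch}\leq |W|_{ch}$ then $|U|_{ch}\leq |W|_{ch}$.

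The plan is to unwind Definition~\ref{smallch} and reduce everything to the transitivity of $\leq$ on classical cardinal numbers, which holds since we are working in $\text{EZF}$ with the classical axiom of choice (so classical cardinals are totally ordered). First I would invoke the $CH$-cardinality definition (Definition~\ref{chcardinalitydef}) to write $|U|_{ch}=|Y|$, $|V|_{ch}=|Y'|$, and $|W|_{ch}=|Y''|$ for suitable Zermelo sets $Y,Y',Y''$ — each of these exists because $|\cdot|_{ch}$ always returns something of the form $|P^{k}(\mathbb{N})|$ or $|\mathbb{N}|$ (or $|U|$ itself when $U$ is Zermelo), all of which are classical cardinalities of Zermelo sets. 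Then, by Definition~\ref{smallch}, the hypothesis $|U|_{ch}\leq |V|_{ch}$ unpacks as $|Y|\leq |Y'|$ and $|V|_{ch}\leq |W|_{ch}$ unpacks as $|Y'|\leq |Y''|$.

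Next I would apply the classical transitivity of $\leq$ on cardinal numbers to conclude $|Y|\leq |Y''|$. Finally, reading Definition~\ref{smallch} in the other direction with $|U|_{ch}=|Y|$ and $|W|_{ch}=|Y''|$, the inequality $|Y|\leq |Y''|$ is exactly the statement $|U|_{ch}\leq |W|_{ch}$, which completes the argument.

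There is essentially no obstacle here; the proof is a routine chase through the definitions, entirely parallel to the proof of Proposition~\ref{Bernstein} which did the same thing for antisymmetry. The only point requiring a word of care is that the representatives $Y,Y',Y''$ are well-defined Zermelo sets so that classical cardinal transitivity genuinely applies, but this is immediate from the form of the $CH$-cardinality definition. For safety one might also remark that the choice of representative does not affect the conclusion, since Definition~\ref{smallch} depends only on the classical cardinality $|Y|$, which is determined by $|U|_{ch}$.

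\begin{proof}
By Definition~\ref{chcardinalitydef}, write $|U|_{ch}=|Y|$, $|V|_{ch}=|Y'|$ and $|W|_{ch}=|Y''|$ for Zermelo sets $Y,Y',Y''$. By Definition~\ref{smallch}, the hypothesis $|U|_{ch}\leq |V|_{ch}$ means $|Y|\leq |Y'|$, and the hypothesis $|V|_{ch}\leq |W|_{ch}$ means $|Y'|\leq |Y''|$. Since $\text{EZF}$ includes the classical axiom of choice, the classical cardinal numbers are totally ordered, and in particular $\leq$ is transitive on them; hence $|Y|\leq |Y''|$. Applying Definition~\ref{smallch} once more, with $|U|_{ch}=|Y|$ and $|W|_{ch}=|Y''|$, we conclude $|U|_{ch}\leq |W|_{ch}$.
\end{proof}
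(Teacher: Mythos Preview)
Your proof is correct and follows essentially the same approach as the paper: introduce representatives $Y,Y',Y''$ via the $CH$-cardinality definition, unwind Definition~\ref{smallch} to get $|Y|\leq|Y'|\leq|Y''|$, apply classical transitivity, and conclude. One small remark: transitivity of $\leq$ on classical cardinals does not actually require the axiom of choice, so that clause is unnecessary, but it does no harm.
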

\begin{proof}Let $|U|_{ch}=|Y|$, $|V|_{ch}=|Y'|$ and $|W|_{ch}=|Y''|$, then by assumption and by the classical transitivity, we have $|Y|\leq |Y''|$ and thus, $|U|_{ch}\leq |W|_{ch}$.\end{proof}

Following the approach given in \cite{suppes}, we extend a few useful definitions.

\begin{definition}\label{chcardinal}$\alpha$ is a CH-cardinal number if and only if there is a set $X$ of $\text{EZF}$ such that $|X|_{ch}=\alpha$.\end{definition}

In \cite{suppes}, there is a similar definition, but for cardinal numbers in $\text{ZF}$. The validity of that definition is assured by the axiom of cardinality. For more detail, see p.111 of \cite{suppes}. Thus, in $\text{EZF}$, the validity of definition \ref{chcardinal} is guaranteed by the axiom of cardinality of \cite{suppes} and the CH-cardinality definition.

\begin{proposition}\label{cardsubsetofch}If $\alpha$ is a cardinal number, then $\alpha$ is a CH-cardinal number.\end{proposition}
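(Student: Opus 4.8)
The plan is to show that every classical cardinal number $\alpha$ arises as $|X|_{ch}$ for some $X \in \text{EZF}$, and the obvious candidate is a Zermelo set itself. By Definition \ref{chcardinalitydef}, if $U$ is a Zermelo set then $|U|_{ch} = |U|$, so the CH-cardinality agrees with the classical cardinality on all of $\mathcal{V}$. Hence the argument is essentially a one-liner: given a cardinal number $\alpha$, by Definition \ref{chcardinal}'s classical analogue (the axiom of cardinality referenced from \cite{suppes}) there is a set $X$ with $|X| = \alpha$, and since $X \in \mathcal{V} \subseteq \text{EZF}$, we get $|X|_{ch} = |X| = \alpha$, so $\alpha$ is a CH-cardinal number by Definition \ref{chcardinal}.

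First I would recall that by definition a (classical) cardinal number $\alpha$ is one for which there exists a Zermelo set $X$ with $|X| = \alpha$; this is exactly the content of the axiom of cardinality invoked in the paragraph following Definition \ref{chcardinal}. Next I would observe that $X \in \mathcal{V}$ implies $X \in \text{EZF}$ by clause 1 of the definition of the class $\text{EZF}$. Then I would apply the first clause of Definition \ref{chcardinalitydef} to conclude $|X|_{ch} = |X| = \alpha$. Finally, Definition \ref{chcardinal} then certifies that $\alpha$ is a CH-cardinal number, since a witnessing set $X$ of $\text{EZF}$ has been exhibited.

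There is essentially no obstacle here; the statement is a bookkeeping consequence of the fact that the CH-cardinality was deliberately defined to extend the classical cardinality. The only thing to be careful about is making sure the witness $X$ is chosen to be a Zermelo set (so that the first, rather than the second, clause of Definition \ref{chcardinalitydef} applies), which is automatic since classical cardinals are defined via Zermelo sets. I would keep the proof to two or three sentences and not belabour it.

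\begin{proof}
Suppose $\alpha$ is a cardinal number. Then, by the axiom of cardinality, there is a Zermelo set $X$ such that $|X| = \alpha$. Since $X \in \mathcal{V}$, we have $X \in \text{EZF}$, and because $X$ is a Zermelo set, Definition \ref{chcardinalitydef} gives $|X|_{ch} = |X| = \alpha$. Hence there is a set $X$ of $\text{EZF}$ with $|X|_{ch} = \alpha$, so by Definition \ref{chcardinal}, $\alpha$ is a CH-cardinal number.
\end{proof}
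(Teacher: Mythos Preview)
Your proof is correct and essentially identical to the paper's own argument: pick a Zermelo set $X$ with $|X|=\alpha$, invoke the first clause of Definition~\ref{chcardinalitydef} to get $|X|_{ch}=|X|=\alpha$, and conclude via Definition~\ref{chcardinal}. The only difference is that you make the intermediate observation $X\in\mathcal{V}\Rightarrow X\in\text{EZF}$ explicit, which the paper leaves implicit.
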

\begin{proof}If $\alpha$ is a cardinal number, by the usual definition of cardinality there is a set of $\mathcal{V}$ such that $|X|=\alpha$. By the CH-cardinality definition \ref{chcardinalitydef}, we have $|X|_{ch}=|X|=\alpha$ since $X$ is Zermelo, thus by definition \ref{chcardinal}, $\alpha$ is a CH-cardinal number.\end{proof}

In \cite{suppes}, Suppes give the following two definitions regarding the context of $\text{ZF}$. To help in comparing the notations, in the following, we will consider $\text{ZF}$ to be the Von Newmann universe $\mathcal{V}$.

\begin{definition}\label{a0}Let $A,B$ be sets of $\text{ZF}$, then $A\preceq B$ if and only if there exists a set $C$ of $\text{ZF}$ such that $|C|=|A|$ and $C\subseteq B$.\end{definition}

\begin{definition}\label{b0} $\alpha \leq \alpha'$ if and only if there are sets $A$ and $B$ of $\text{ZF}$ such that $|A|=\alpha$, $|B|=\alpha'$ and $A\preceq B$.\end{definition}

We extend those definitions to $\text{EZF}$ in the following way.

\begin{definition}\label{a}Let $A,B$ be sets of $\text{EZF}$, then $A\preceq B$ if and only if there is a set $C$ of $\text{EZF}$ such that $|C|_{ch}=|A|_{ch}$ and $C\subseteq B$.\end{definition}

\begin{definition}\label{b} $\alpha \leq \alpha'$ if and only if there are sets $A$ and $B$ of $\text{EZF}$ such that $|A|_{ch}=\alpha$, $|B|_{ch}=\alpha'$ and $A\preceq B$.\end{definition}

Note that if we take $A,B$ to be Zermelo sets, the definitions \ref{a} and \ref{b} reduce to \ref{a0} and \ref{b0}, since by the CH-cardinality definition $|A|_{ch}=|A|$ and $|B|_{ch}=|B|$.

\begin{lemma}\label{propA} Let $A$ be a set of $\text{EZF}$ and let $B$ be a Zermelo set, then $|A|_{ch}\leq|B|_{ch}$ if and only if there is a Zermelo set $C''$ such that $|C''|_{ch}=|A|_{ch}$ and $C''\subseteq B$.\end{lemma}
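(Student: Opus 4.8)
The plan is to prove both directions of the biconditional using Definitions~\ref{a}, \ref{b} and the $CH$-cardinality definition~\ref{chcardinalitydef}. The right-to-left direction is the easy one: if there is a Zermelo set $C''$ with $|C''|_{ch}=|A|_{ch}$ and $C''\subseteq B$, then since $C''$ is in $\text{EZF}$, Definition~\ref{a} immediately gives $A\preceq B$, and then Definition~\ref{b} gives $|A|_{ch}\leq|B|_{ch}$ (taking the sets $A$ and $B$ themselves as the witnesses, using $|B|_{ch}=|B|$ because $B$ is Zermelo). So the bulk of the work is the forward direction.

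For the forward direction, suppose $|A|_{ch}\leq|B|_{ch}$. By Definition~\ref{b} there are sets $A_0,B_0$ of $\text{EZF}$ with $|A_0|_{ch}=|A|_{ch}$, $|B_0|_{ch}=|B|_{ch}$, and $A_0\preceq B_0$; unwinding Definition~\ref{a}, there is a set $C$ of $\text{EZF}$ with $|C|_{ch}=|A_0|_{ch}=|A|_{ch}$ and $C\subseteq B_0$. The goal is to replace this $C\subseteq B_0$ with a genuinely Zermelo $C''\subseteq B$ of the same $CH$-cardinality. First I would reduce to the classical cardinal level: by construction $|A|_{ch}$ is some classical cardinal $\kappa = |Y|$ for a Zermelo set $Y$ (every value of $|\cdot|_{ch}$ is, by inspection of Definition~\ref{chcardinalitydef}, the classical cardinality of a Zermelo set — either the set itself, or some $P^{k-1}(\mathbb N)$, or $\mathbb N$), and likewise $|B|_{ch}=|B|$ since $B$ is Zermelo. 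Then Definition~\ref{b}'s clause, chased through Definition~\ref{a} and the Schroeder–Bernstein-style bookkeeping of Definition~\ref{smallch}/Proposition~\ref{Bernstein}, says precisely that $\kappa \leq |B|$ as classical cardinals. Now apply the classical fact (available since we have the axiom of choice over Zermelo sets, so classical cardinals are totally ordered and the classical version of this lemma, Definitions~\ref{a0}–\ref{b0}, holds): from $\kappa\leq|B|$ there is a Zermelo set $C''\subseteq B$ with $|C''|=\kappa$. Finally, since $C''$ is Zermelo, $|C''|_{ch}=|C''|=\kappa=|A|_{ch}$, which is exactly what is required.

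The step I expect to be the main obstacle is the careful unwinding that $|A|_{ch}\leq|B|_{ch}$ in the sense of Definition~\ref{b} actually forces the inequality $\kappa\leq|B|$ at the level of classical cardinals — Definition~\ref{b} quantifies over $\text{EZF}$ sets $A_0,B_0$ and an intermediate $\text{EZF}$ set $C$, none of which need be Zermelo, and one must check that the $C\subseteq B_0$ obtained there can legitimately be pushed down to a classical subset relation. Here the crucial structural input is that non-Zermelo sets in $\text{EZF}$ are exactly the $P^{-1}(X)$, and Proposition~\ref{suppesinverse} lets us translate $\subseteq$ between inverse powersets back to $\subseteq$ between the underlying Zermelo sets, while Definition~\ref{chcardinalitydef} pins down what the $CH$-cardinalities of all these objects are in terms of classical cardinals. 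Once that translation is in hand, the rest is the classical Schroeder–Bernstein/choice argument applied inside $\mathcal V$, which is routine. I would also remark explicitly, for use later, that this lemma is the bridge that lets one compare an arbitrary $\text{EZF}$ set against the yardsticks $\mathbb N$, $P(\mathbb N)$, $P^2(\mathbb N),\dots$ purely in terms of Zermelo witnesses.
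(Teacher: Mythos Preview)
Your proposal is correct and follows essentially the same route as the paper: both directions go through Definitions~\ref{a} and~\ref{b}, and for the forward direction one observes that every value of $|\cdot|_{ch}$ is the classical cardinality of some Zermelo set, reduces $|A|_{ch}\leq|B|_{ch}$ to a classical inequality $|C'|\leq|B|$, and then applies the classical Definitions~\ref{a0}--\ref{b0} to extract a Zermelo $C''\subseteq B$ with $|C''|=|C'|=|A|_{ch}$. The obstacle you flag is not actually present: Definition~\ref{smallch} already identifies $|A|_{ch}\leq|B|_{ch}$ with the classical inequality directly, so the paper never needs to analyse the intermediate inclusion $C\subseteq B_0$ or invoke Proposition~\ref{suppesinverse}.
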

\begin{proof}($\Rightarrow$)Using definitions \ref{b} and \ref{a},  there is a set $C$ of $\text{EZF}$ such that $|C|_{ch}=|A|_{ch}$ and $C\subseteq B$.  By the definition of CH-cardinality, $|C|_{ch}=|C'|$ for some Zermelo set $C'$ and thus $|C|_{ch}=|C'|_{ch}$. Since $B$ is Zermelo we have $|B|_{ch}=|B|$ and since $|C|_{ch}=|A|_{ch}$ and $|C|_{ch}=|C'|_{ch}=|C'|$, we have that $|A|_{ch}=|C'|$. By assumption, $|A|_{ch}\leq|B|_{ch}$, hence by replacing $|A|_{ch}$ and $|B|_{ch}$, we find $|C'|\leq|B|$. By definition \ref{b0} and \ref{a0},  there exists a set $C''$ of $\text{ZF}$ such that $|C''|=|C'|$ and $C''\subseteq B$. This is what we were looking for since $|C''|_{ch}=|C''|=|C'|=|C|_{ch}=|A|_{ch}$.

($\Leftarrow$) By definitions \ref{a} and \ref{b} and since Zermelo sets are sets of $\text{EZF}$ we find that $|A|_{ch}\leq|B|_{ch}$.
\end{proof}

\begin{definition}\label{c} $\alpha<\alpha'$ if and only if $\alpha\leq\alpha'$ and not $\alpha'\leq\alpha$.\end{definition}

\begin{definition} Let $U$ be a set of $\text{EZF}$ and let $|U|_{ch}=|Y|$, then $U$  is finite if and only if $|Y|$ is finite. A set of $\text{EZF}$ is infinite if it is not finite.\end{definition}

We will now evaluate the CH-cardinality of $P^{-1}(\mathbb{N})$. It is important to note that by definition, $\mathbb{N}$ is a Zermelo set, but it is not clear if it is a powered Zermelo set or not.

\begin{theorem}\label{minimality}Let $A$ be a Zermelo set such that $|A|_{ch}=|\mathbb{N}|_{ch}$ then $|P^{-1}(A)|_{ch}=|\mathbb{N}|_{ch}$.\end{theorem}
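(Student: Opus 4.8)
The plan is to show that the CH-cardinality definition forces $|P^{-1}(A)|_{ch} = |\mathbb{N}|_{ch}$ whenever $A$ has CH-cardinality $|\mathbb{N}|$, by splitting into cases according to whether $A$ is a powered Zermelo set and, if so, of what form. First I would observe that $|A|_{ch} = |\mathbb{N}|_{ch}$ together with Definition~\ref{smallch} means $|A| = |\mathbb{N}|$ in the classical sense, i.e. $A$ is countably infinite; in particular $A$ has degree $0$ since $|A| \le |\mathbb{N}|$.

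Now I would distinguish the cases in the definition of CH-cardinality applied to the non-Zermelo set $P^{-1}(A)$. If $A$ is a non-powered Zermelo set, then since $A$ is of degree $0$ the definition gives directly $|P^{-1}(A)|_{ch} = |\mathbb{N}|$, which is $|\mathbb{N}|_{ch}$, and we are done. If instead $A$ is a powered Zermelo set, write $A = P^n(A')$ with $A'$ a non-powered Zermelo set; I would argue that $n$ must be $0$, because for $n \ge 1$ a powerset $P^n(A')$ has cardinality at least $|P(\mathbb{N})| > |\mathbb{N}|$ (using that $A'$ is nonempty, or handling the trivial small-$A'$ cases separately), contradicting $|A| = |\mathbb{N}|$. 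Hence $A = A'$ is in fact non-powered, reducing to the previous case. The upshot is that a countably infinite Zermelo set is automatically non-powered, so the first branch of Definition~\ref{chcardinalitydef} applies and yields $|P^{-1}(A)|_{ch} = |\mathbb{N}| = |\mathbb{N}|_{ch}$.

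The main obstacle I anticipate is the careful bookkeeping around the degenerate cases and around whether $\mathbb{N}$ itself is powered: the excerpt explicitly flags that it is unclear whether $\mathbb{N}$ is a powered Zermelo set, so the argument cannot simply assume $A$ non-powered. The clean way to dispose of this is the cardinality bound above — any set of the form $P^n(A')$ with $n\ge 1$ and $A'$ having at least one element has strictly more than $\aleph_0$ elements once $|A'|\ge\aleph_0$, and when $A'$ is finite the tower $P^n(A')$ is finite, so in neither subcase can $P^n(A')$ be countably infinite for $n \ge 1$. Thus the only Zermelo sets of CH-cardinality $|\mathbb{N}|$ are the non-powered ones, and for those the definition immediately gives $|P^{-1}(A)|_{ch} = |\mathbb{N}|_{ch}$. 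A final remark: one should check the edge case where $A$ is empty or finite is excluded by $|A|_{ch} = |\mathbb{N}|_{ch}$ forcing $A$ infinite, so $P^{-1}(A)$ is legitimately formed over $\mathcal{V}^*$ and the non-Zermelo branch of the definition genuinely applies.
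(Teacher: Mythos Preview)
Your proposal is correct and follows essentially the same approach as the paper: both reduce to showing that a countably infinite Zermelo set cannot be powered (since $P(A')$ is finite when $A'$ is finite and has cardinality at least $|P(\mathbb{N})|$ when $A'$ is infinite), after which the degree-$0$ clause of Definition~\ref{chcardinalitydef} gives $|P^{-1}(A)|_{ch}=|\mathbb{N}|$ directly. The paper phrases the powered case via a trichotomy on $|P^{-1}(A)|_{ch}$ versus $|\mathbb{N}|_{ch}$ rather than your finite/infinite split on $A'$, but this amounts to the same cardinality bound.
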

\begin{proof}If $A$ is not a powered Zermelo set, by definition of CH-cardinality, we have $|P^{-1}(A)|_{ch}=|A|$. Since $|\mathbb{N}|=|\mathbb{N}|_{ch}$ and by assumption $|A|_{ch}=|\mathbb{N}|_{ch}$, we find that $|P^{-1}(A)|_{ch}=|\mathbb{N}|_{ch}$. Thus, if we show that $A$ cannot be a powered Zermelo set, we are finished.

Assume that $A$ is a powered Zermelo set, then there is a Zermelo set $A'$ (powered or not) such that $P(A')=A$.

Suppose that $|P^{-1}(A)|_{ch}\geq|\mathbb{N}|_{ch}$, then replacing $A$ we find $|P^{-1}(P(A'))|_{ch}\geq|\mathbb{N}|_{ch}$ which becomes $|A'|_{ch}\geq|\mathbb{N}|_{ch}$ by proposition \ref{inverse2}. Since $|A'|=|A'|_{ch}\geq|\mathbb{N}|_{ch}=|\mathbb{N}|$, we find $|P(A')|\geq |P(\mathbb{N})|$ by using the well known result $|X|\geq|Y|\Rightarrow |P(X)|\geq|P(Y)|$ (see the lemma on p.95 of \cite{jech}). But since $P(A')=A$ this means that $|\mathbb{N}|=|A|=|P(A')|_{ch}\geq |P(\mathbb{N})|$, a contradiction to Cantor's theorem.

Now, suppose that $|P^{-1}(A)|_{ch}<|\mathbb{N}|_{ch}$. Since $A$ is a powered Zermelo set, then there is a Zermelo set $A'$ such that $P(A')=A$. Replacing on the left hand side of  $|P^{-1}(A)|_{ch}<|\mathbb{N}|_{ch}$ we find $|P^{-1}(P(A'))|_{ch}<|\mathbb{N}|_{ch}$ and thus by proposition \ref{inverse2} and because $A'$ and $\mathbb{N}$ are Zermelo sets, $|A'|=|A'|_{ch}<|\mathbb{N}|_{ch}=|\mathbb{N}|$. Using the definition of \cite{jech} p.74, which says that a subset $S$ is at most countable if $|S|\leq |\mathbb{N}|$, this mean that $A'$ is at most countable. By the corollary of p.74 of \cite{jech}, a set is at most countable if and only if it is finite or countable. If $A'$ is finite, we find that $P(A')=A$ is finite, a contradiction with $|A|=|\mathbb{N}|$. Thus, we must have that $A'$ is countable. This means that $|A'|=\aleph_0$. Thus we have that $\aleph_0=|A'|=|A'|_{ch}<|\mathbb{N}|_{ch}=|\mathbb{N}|=\aleph_0$, an impossibility.
\end{proof}

We are now in a context where we can prove the continuum hypothesis.

\begin{theorem}In $\text{EZF}$, there exists no CH-cardinal number $\beta$ such that $\aleph_0 < \beta < \mathfrak{c}$ where $\aleph_0$ is the CH-cardinality of a countable set and $\mathfrak{c}$ is the CH-cardinality of the continuum. \end{theorem}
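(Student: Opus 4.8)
The plan is to argue by contradiction, following the squeeze strategy outlined in Section \ref{motivation}. Suppose there is a $CH$-cardinal number $\beta$ with $\aleph_0 < \beta < \mathfrak{c}$, say $\beta = |B|_{ch}$ for some $B\in\text{EZF}$, where $\aleph_0 = |\mathbb{N}|_{ch}$ and $\mathfrak{c} = |P(\mathbb{N})|_{ch} = |P(\mathbb{N})|$. Since $P(\mathbb{N})$ is a Zermelo set and $\beta\leq\mathfrak{c}$, Lemma \ref{propA} supplies a Zermelo set $C$ with $|C| = |C|_{ch} = |B|_{ch} = \beta$ and $C\subseteq P(\mathbb{N})$; because definitions \ref{a}--\ref{c} reduce to the classical cardinal comparisons on Zermelo sets, the hypothesis $\aleph_0 < \beta < \mathfrak{c}$ becomes the ordinary chain $\aleph_0 < |C| < 2^{\aleph_0}$. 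Thus it suffices to derive a contradiction from the existence of such a $C$.

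First I would show $C$ is a non-powered Zermelo set, reusing the argument in the proof of Theorem \ref{minimality}: if $C = P(W)$ then $|P(W)| < |P(\mathbb{N})|$ forces $|W| < |\mathbb{N}|$ (contrapositive of $|X|\geq|Y|\Rightarrow|P(X)|\geq|P(Y)|$, using choice), so $W$ is finite, hence $C = P(W)$ is finite, contradicting $\aleph_0 < |C|$. Next I would pin down the degree of $C$: from $|P^{0}(\mathbb{N})| = \aleph_0 < |C| \leq 2^{\aleph_0} = |P^{1}(\mathbb{N})|$ and the uniqueness of the degree, $C$ has degree exactly $1$. The $CH$-cardinality definition \ref{chcardinalitydef} then gives $|P^{-1}(C)|_{ch} = |P^{0}(\mathbb{N})| = |\mathbb{N}| = \aleph_0$.

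Now I would run the squeeze at the level of inverse powersets. On the upper end, $C\subseteq P(\mathbb{N})$ with both sets Zermelo gives $P^{-1}(C)\subseteq P^{-1}(P(\mathbb{N})) = \mathbb{N}$ by Propositions \ref{suppesinverse} and \ref{inverse2}, with $|P^{-1}(P(\mathbb{N}))|_{ch} = |\mathbb{N}|_{ch} = \aleph_0$. On the lower end, since $\aleph_0 < |C|$ there is a Zermelo set $D\subseteq C$ with $|D| = \aleph_0$, again non-powered and of degree $0$, so $|P^{-1}(D)|_{ch} = |\mathbb{N}| = \aleph_0$ and $P^{-1}(D)\subseteq P^{-1}(C)$. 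This recovers $|P^{-1}(C)|_{ch} = \aleph_0$, but the contradiction must come from transferring the \emph{strict} bound $\beta < \mathfrak{c}$ across $P^{-1}$. The cleanest route I see is a compatibility lemma to the effect that $|U|_{ch} = |V|_{ch}$ implies $|P(U)|_{ch} = |P(V)|_{ch}$ (or just the instance that $|P^{-1}(C)|_{ch} = |\mathbb{N}|_{ch}$ forces $|C|_{ch} = |P(\mathbb{N})|_{ch}$); combined with $C = P(P^{-1}(C))$ from Proposition \ref{inverse}, this yields $\beta = |C|_{ch} = |P(\mathbb{N})|_{ch} = \mathfrak{c}$, contradicting $\beta < \mathfrak{c}$.

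The step I expect to be the main obstacle is exactly this last one. Since the $CH$-cardinality definition deliberately collapses every non-powered Zermelo set of degree $1$ to $\aleph_0$ on the inverse-powerset side, a naive monotonicity argument only yields the non-strict squeeze $|P^{-1}(C)|_{ch} = \aleph_0$, which is by itself consistent with $\aleph_0 < \beta < \mathfrak{c}$. So the real work is establishing the compatibility lemma, which requires careful degree-and-powered bookkeeping so that $P$ and $P^{-1}$ interact correctly with the $CH$-cardinalities of non-Zermelo sets; once that is in place the squeeze becomes strict and the theorem follows.
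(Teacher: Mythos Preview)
Your overall outline---assume such a $\beta$ exists, pass to a Zermelo witness inside $P(\mathbb N)$ via Lemma~\ref{propA}, squeeze through $P^{-1}$ using Propositions~\ref{suppesinverse} and~\ref{inverse2} together with Theorem~\ref{minimality}, and look for a contradiction---is exactly the paper's strategy. The organisational difference is that you eliminate the powered case up front and then try to close everything with a single ``compatibility lemma,'' whereas the paper keeps the case split (powered versus non-powered $B'$) until \emph{after} the squeeze and disposes of the two cases by different mechanisms, obtaining opposite contradictions: $|B'|_{ch}=\mathfrak c$ in the powered case, $|B'|_{ch}=\aleph_0$ in the non-powered case.

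The gap you flag is genuine and the repair you propose does not work. The compatibility lemma $|U|_{ch}=|V|_{ch}\Rightarrow|P(U)|_{ch}=|P(V)|_{ch}$ is false across the Zermelo/non-Zermelo divide: with $U=P^{-1}(C)$ non-Zermelo and $V=\mathbb N$, both have CH-cardinality $\aleph_0$ by Definition~\ref{chcardinalitydef}, yet $P(U)=C$ satisfies $|C|_{ch}=|C|$ strictly between $\aleph_0$ and $\mathfrak c$ while $|P(V)|_{ch}=\mathfrak c$---so the very hypothesis you are trying to refute furnishes a counterexample. The paper does not attempt to lift through $P$ in the non-powered case at all; instead it reads off directly from Definition~\ref{chcardinalitydef} that $|P^{-1}(B')|_{ch}=|B'|_{ch}$ when $B'$ is non-powered, and then the squeeze identity $|P^{-1}(B')|_{ch}=|\mathbb N|_{ch}$ immediately gives $|B'|_{ch}=\aleph_0$, contradicting the \emph{lower} bound rather than the upper one. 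So the missing ingredient is not a sharpened compatibility lemma but the separate, definition-level argument for the non-powered case; the powered case is the only place where the classical implication $|X|=|Y|\Rightarrow|P(X)|=|P(Y)|$ (legitimate there because $P^{-1}(B')$ is Zermelo) is invoked.
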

\begin{proof}Assume that there is such a CH-cardinal number $\beta$. Since $\aleph_0=|\mathbb{N}|_{ch}=|\mathbb{N}|$ and $\mathfrak{c}=|P(\mathbb{N})|_{ch}=|P(\mathbb{N})|$ we have by definition \ref{chcardinal}, that there exists set $B$ of $\text{EZF}$ such that $|\mathbb{N}|_{ch}< |B|_{ch} < |P(\mathbb{N})|_{ch}$.

Since $|B|_{ch}< |P(\mathbb{N})|_{ch}$, by lemma \ref{propA} and definition \ref{c}, there exists a Zermelo set $B'$ such that $B'\subset P(\mathbb{N})$,  $|B'|_{ch}=|B|_{ch}$ and $|B'|_{ch}\neq|P(\mathbb{N})|_{ch}$. Also, since $|\mathbb{N}|_{ch}< |B|_{ch}=|B'|_{ch}$, by lemma \ref{propA}, there exists a Zermelo set $N'$ such that $N'\subset B'$,  $|N'|_{ch}=|\mathbb{N}|_{ch}$ and $|N'|_{ch}\neq|B'|_{ch}$.

Hence, by the proposition \ref{transitivity} of transitivity, we have $N'\subset B'\subset P(\mathbb{N})$. By proposition \ref{suppesinverse}, we find $P^{-1}(N')\subset P^{-1}(B')\subset P^{-1}(P(\mathbb{N}))$ and by proposition \ref{inverse}, we get $P^{-1}(N')\subset P^{-1}(B')\subset \mathbb{N}$. By definition \ref{a} and \ref{b} this means that $|P^{-1}(N')|_{ch}\leq |P^{-1}(B')|_{ch}\leq |\mathbb{N}|_{ch}$. By theorem \ref{minimality}, we have $|\mathbb{N}|_{ch}=|P^{-1}(N')|_{ch}\leq |P^{-1}(B')|_{ch}\leq |\mathbb{N}|_{ch}$ and by proposition \ref{Bernstein}, we find that $|P^{-1}(B')|_{ch}=|\mathbb{N}|_{ch}$.

We now have two cases to consider: $B'$ is a powered Zermelo set or $B'$ is not a powered Zermelo set.

If $B'$ is a powered Zermelo set, then we can write $B'$ as $P^n(X')$ with $X'$ a Zermelo set. Since the powerset of a Zermelo set is Zermelo, we can write $B'$ as $P(X)$ with $X$ being a Zermelo set. Replacing $B'=P(X)$ in $|\mathbb{N}|_{ch}=|P^{-1}(B')|_{ch}$, we find $|\mathbb{N}|_{ch}=|P^{-1}(P(X))|_{ch}$ which becomes by proposition \ref{inverse}, $|\mathbb{N}|_{ch}=|X|_{ch}$. Since $\mathbb{N}$ and $X$ are Zermelo we find, by the classical result $|X|=|Y|\Rightarrow |P(X)|=|P(Y)|$ (see p.95 lemma of \cite{jech}), that  $|P(\mathbb{N})|_{ch}=|P(X)|_{ch}$. But since $B'=P(X)$ and $|B'|_{ch}=|B|_{ch}$ we have $|P(\mathbb{N})|_{ch}=|B|_{ch}$ which is a contradiction with our primary assumption that $|P(\mathbb{N})|_{ch}>|B|_{ch}$.

Let $B'$ be a non-powered Zermelo set, then we have that $P^{-1}(B')$ is a non-Zermelo set. The CH-cardinality definition then tells us that $|P^{-1}(B')|_{ch}=|B'|_{ch}$. Thus, since we have found that $|P^{-1}(B')|_{ch}=|\mathbb{N}|_{ch}$ and since $|B'|_{ch}=|B|_{ch}$, we have that $|B|_{ch}=|B'|_{ch}=|P^{-1}(B')|_{ch}=|\mathbb{N}|_{ch}$, a contradiction with our primary assumption that $|\mathbb{N}|_{ch}<|B|_{ch}$.

\end{proof}

\begin{corollary}In $\text{EZF}$, there exists no cardinal number $\beta$ such that $\aleph_0 < \beta < c$.\end{corollary}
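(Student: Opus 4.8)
The plan is to read the corollary off directly from the theorem just proved, using that classical cardinal arithmetic embeds into CH-cardinal arithmetic. The key observations are two facts already established. First, by Proposition~\ref{cardsubsetofch}, every cardinal number is a CH-cardinal number, and it is realized by a Zermelo set. Second, as noted immediately after Definition~\ref{b}, the relations $\preceq$ and $\leq$ of Definitions~\ref{a} and~\ref{b} restrict to the classical relations of Definitions~\ref{a0} and~\ref{b0} when all sets involved are Zermelo, because $|A|_{ch}=|A|$ for Zermelo $A$; together with Definition~\ref{c} this gives that, for cardinals realized by Zermelo sets, the strict CH-order and the strict classical order coincide.

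So I would argue by contradiction: suppose $\beta$ is a cardinal number with $\aleph_0<\beta<\mathfrak{c}$ in the classical sense, where $\aleph_0=|\mathbb{N}|$ and $\mathfrak{c}=|P(\mathbb{N})|$. Choose a Zermelo set $B$ with $|B|=\beta$, so that $|B|_{ch}=\beta$ by the CH-cardinality definition. The sets $\mathbb{N}$ and $P(\mathbb{N})$ are Zermelo, with $|\mathbb{N}|_{ch}=\aleph_0$ and $|P(\mathbb{N})|_{ch}=\mathfrak{c}$. Since the classical strict order on these Zermelo-realized cardinals agrees with the strict CH-order, the classical inequalities $\aleph_0<\beta$ and $\beta<\mathfrak{c}$ yield $|\mathbb{N}|_{ch}<|B|_{ch}<|P(\mathbb{N})|_{ch}$ as CH-cardinal inequalities, with $\beta=|B|_{ch}$ a CH-cardinal number. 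This contradicts the theorem, which states that no CH-cardinal number lies strictly between $\aleph_0$ and $\mathfrak{c}$. Hence no such $\beta$ exists.

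I do not expect any real obstacle here: the argument is purely a matter of transporting the statement of the theorem along the inclusion of classical cardinals into CH-cardinals. The only point that deserves a moment's care is the verification that classical strictness transfers to CH-strictness, which is exactly where Definition~\ref{c} and the reduction remark following Definition~\ref{b} are invoked; every other step is an immediate appeal to the theorem and to Proposition~\ref{cardsubsetofch}.
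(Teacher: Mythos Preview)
Your proposal is correct and follows essentially the same approach as the paper: the paper's proof simply invokes Proposition~\ref{cardsubsetofch} to say that every cardinal number is a CH-cardinal number, so the corollary follows immediately from the theorem. You have merely spelled out in more detail the step the paper leaves implicit, namely that the classical strict order on cardinals realized by Zermelo sets agrees with the CH-order via Definition~\ref{c} and the reduction remark after Definition~\ref{b}.
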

\begin{proof}This follows from \ref{cardsubsetofch}, since every cardinal number is a CH-cardinal number.\end{proof}

\section{Transfinite Empty Sets}\label{emptysets}

This section aims at discussing an interesting topic involving the concept of inverse powerset while not formally establishing the theory. Cantor showed that there are infinite sets of different cardinalities (see Cantor in \cite{Ewald}). In this section, we suggest an investigation of the same idea but for empty sets. For infinite sets, it was clear from the beginning that there were many distinct infinite sets. In $\text{ZF}$, it was difficult to consider the existence of more than one empty set, in particular what could be more empty than an object which does not contain an uncountable number of objets? In $\text{EZF}$, we are now in a position to define another empty set. The goal of this section is to suggest systems where we would have more than one empty set of different cardinalities.

Recall that the empty set is defined as follows, where $X$ is will be denoted as $\emptyset_0$.

$$(\exists X\in\mathcal{V}) (\forall y\in\mathcal{V})\neg(y\in X)$$

Since we extended Zermelo set theory and defined the collection of sets $\text{EZF}$, we can now consider another type of empty set with the following statement.

$$(\exists Z\in\mathcal{V}) (\forall y\in\text{EZF})\neg(y\in Z)$$

We will denote $Z$ as $\emptyset_1$.

The question is to ask if those two `empty sets' have the same cardinality. At first glance, an empty set is a set devoid of elements and it is unique, but we can look more closely and consider which elements are not in the set. From this perspective, there are more elements which are not in $\emptyset_1$. The classical definition of set difference tells us that if $A\subset  B$, then $A\setminus A= A\setminus B=\emptyset$. Based on the idea of subtraction of integers, it would be interesting to consider $A\setminus A$ as having a higher cardinality than $A\setminus B$ by considering $A\setminus B$ to have `negative' elements.

One avenue is to add new axioms which define new sets $x^-, y^-, z^-$ such that $\{x,x^-\}=\emptyset$. For example, we could have that $\{a,b\}\setminus \{a,b,c\}=\{c^-\}$. Although we will not do this in the present text, it could involve axioms of the following type. In the following statements consider that the superscript `$-$' is a constant of the language.

$$\forall A\exists B[x\in A\Rightarrow x^- \in B]$$
$$\forall x[\{x,x^-\}=\emptyset]$$
$$\forall Z\forall X\exists Y[X\cup Y= Z]$$
$$\forall D\forall X\forall d[(d\in D\wedge d\notin X) \Rightarrow (d^-\in X\setminus (X\cup D))]$$

Notice that the third suggestion seems to induce the `algebraic closure' for the union.

Another avenue is to consider the following definition.

\begin{definition}Two equipotent sets $X,Y$ are said to be strongly equipotent if and only if there exists a bijective function between the set of elements which are not in $X$ and the set of elements which are not in $Y$.\end{definition}

In most classical considerations, in particular in Zermelo sets defined without using the complement and the set difference and formulas of the type `$x\notin A$, we can say that equipotent sets are strongly equipotent. Classically, when we consider the set $X=\{a,b,c\}$ we implicitly think of this set as the set which does not have all Zermelo sets except $a,b$ and $c$. Thus, if $X=\{a,b,c\}$ and $Y=\{d,e,f\}$ then $X$ and $Y$ are strongly equipotent. But it is not the case for $A\setminus A$ and $A\setminus B$ when $A\subset B$, since there are elements of $B$ which are not in $A$.

Coming back to our two empty sets $\emptyset_0$ and $\emptyset_1$, we could show that $\emptyset_0$ and $\emptyset_1$ are empty sets which are not strongly equipotent. In essence, we would need to show that there is no bijection between $\mathcal{V}$ and $\text{EZF}$. Notice that there seems to be many more sets in $\text{EZF}$ than in $\mathcal{V}$.

The idea behind a proof is as follows. We want to show that $|\text{EZF}|=|P(\mathcal{V})|$, so that we have $|\mathcal{V}|<|\text{EZF}|$.  Each proper subset $S$ of the class $\mathcal{V}$ is a set of $\mathcal{V}$. For all such proper subsets $S$, the set $P^{-1}(\mathcal{V}\setminus S)$ is a set of $\text{EZF}$. If we show that each distinct $S$ corresponds to a unique element of $\text{EZF}$, we would find under this correspondence that $|\text{EZF}|=|P(\mathcal{V})|$ which means that $|\mathcal{V}|<|\text{EZF}|$.

Hence, by the definition of strongly equipotent, since $|\mathcal{V}|<|\text{EZF}|$ we have that $\emptyset_1$ and $\emptyset_0$ are not strongly equipotent. Thus after defining the concept of `strong cardinality' with its associated order relation we would have that $|\emptyset_1|_s<|\emptyset_0|_s$. Here the fact that $|\mathcal{V}|<|\text{EZF}|$ allows us to conclude $|\emptyset_1|_s<|\emptyset_0|_s$, but it is important to note this would not be so if you extended $\mathcal{V}$ with a finite number of sets or even with an infinite number of sets equal to or smaller than $\mathcal{V}$. In fact, in our approach under the strongly equipotent definition, there are many more empty sets smaller than $\emptyset_0$, for example $A\setminus C$ where $C=A\cup\{b\}$ and $b\notin A$.

We will see in the next section, that after defining the $N$th level inverse powerset axioms we will have the opportunity to consider the following sequence of empty sets.
$$|\emptyset_0|_s>|\emptyset_1|_s>|\emptyset_2|_s>...>|\emptyset_n|_s>...$$

Note that, for example, the definition of $\emptyset_2$ relies on the extension of $\text{EZF}$ with objects of the form $P^{-1}(P^{-1}(X))$. We might also be able to eventually consider a sequence where $n$ is any ordinal number. It is possible that a formal proof of $|\mathcal{V}|<|\text{EZF}|$ needs the transfinite induction.

\section{Nth Level Inverse Powerset Axioms}\label{level n}

The powerset operator can be applied repeatedly to a set, for example $P(P(P(X)))$. We would also like to do this with the inverse powerset to have sets of the form $P^{-1}(P^{-1}(P^{-1}(X)))$. The propositions seen in section \ref{EZF level 1} as well as the continuum hypothesis seem to generalize well to a theory including such objects, but note that for our purpose we will not need to prove them since the proof of the falsity of the continuum hypothesis does not rely on them.

To the system of axioms of $\text{EZF}$, we add the following axioms for each integer $n\geq 1$.

\begin{axiom}[Powerset Schema]$$\forall X\exists Y\forall A[A\in_{n+1} X\Leftrightarrow A\in_{n} Y]$$\end{axiom}

We will denote $Y$ as $P(X)$ and call it the \textit{powerset} of $Y$.

\begin{axiom}[Inverse powerset Schema]$$\forall Y\exists X\forall A[A\in_{n} Y\Leftrightarrow A\in_{n+1} X]$$\end{axiom}

We will denote $X$ as $P^{-1}(Y)$ and call it the \textit{inverse powerset} of $Y$.

\begin{axiom}[Extended Extensionality]$$\forall A\forall B\forall S[S\in_n A \Leftrightarrow S\in_n B]\Rightarrow A=B$$\end{axiom}

We also modify our definition of subsets.

\begin{definition}[Extended Subsets]\label{extsubset} $A\subseteq B$ if and only if
\medskip
\begin{center}$(\forall A\in\mathcal{V})(A\in X\Rightarrow A\in Y)\wedge (X\in \mathcal{V}) \wedge (B\in \mathcal{V})$ \\ or, for some $n$ \\ $\forall A(A\in_n X\Rightarrow A\in_n Y)\wedge [(X\notin \mathcal{V})\vee (Y\notin\mathcal{V})]$\end{center}
\end{definition}

We now give a useful definition which will help our notation.

\begin{definition}If $P^{-1}$ occurs $n$ times in $P^{-1}(P^{-1}(...P^{-1}(X)...))$ and $X$ is a Zermelo set such that $X\neq P(X')$ for all Zermelo sets $X'$, we will denote $P^{-1}(P^{-1}(...P^{-1}(X)...))$ as $P^{-n}(X)$ and we will say that it is a non-Zermelo set of level $n$.
\end{definition}

We know that for Zermelo sets, the axiom of subset assignment tells us that $x\subseteq X$ is equivalent to $x\in_1 X$. It is important to note that by construction there are no sets except for the Zermelo sets which have two different type of members. For example, a set cannot be such that $A\in_2 X$ and $B\in X$. As in $\text{EZF}$, the axiom of pairing and of union are defined over the Zermelo sets only, hence for now there are no sets which have `mixed' elements. In this system, if an element $z\in_2 Z$, then we can assume that Z is a set of the form $P^{-2}(Z'')$ for some Zermelo set $Z''$. Here, the axiom of inverse powerset schema allows us to construct sets of the form $P^{-k}(W)$ and the axiom powerset schema allows us to apply the powerset operator to such an object to get $P(P^{-k}(W))$.

We now define the class $\text{EZF}^p$ where $\text{EZF}^p$ contains all Zermelo sets and all non-Zermelo sets of any level. Note that by definition, the operators $P$ and $P^{-1}$ are applied a countable infinite number of times. The collection $\text{EZF}^p$ can also be viewed as $\text{ZF}$ augmented with non-Zermelo sets of all levels.

\pagebreak

\begin{definition}\label{terms}We define the collection $\text{EZF}^p$ of real sets from $\text{EZF}$ as:

\begin{itemize}
\item[1.] If $U$ is a Zermelo set, then $U\in \text{EZF}^p$,
\item[2.] If $U$ is a Zermelo set and $n$ is a positive integer, then $P^{-n}(U)\in \text{EZF}^p$,
\end{itemize}
\end{definition}

By definition, we have that a set of $\text{EZF}^p$ is a Zermelo set or a non-Zermelo set of a certain level. Note that this definition could also be generalized further to ordinal numbers by allowing an uncountable number of occurrences of the operators of powerset and inverse powerset, but would need transfinite recursion.

\begin{definition}We will say that two sets $X$, $Y$ are of the same level if and only if $m>0$, $X=P^{-m}(X')$ and $Y=P^{-m}(Y')$ for some non-powered Zermelo sets $X'$ and $Y'$ or if they are both non-powered Zermelo sets.\end{definition}

\section{Disproving the Continuum Hypothesis}\label{section falsity of CH}

We now give two extended definitions of cardinality which will induce the falsity of the continuum hypothesis in the context of $\text{EZF}^*$ which will be defined below. The first extended definition is much weaker than the second in the sense that in the second there are many more $\neg$CH-cardinal numbers between $\mathbb{N}$ and $\mathfrak{c}$. The definitions are closely related to the lexicographic order. We will call the weaker extended definition of cardinality \textit{$\neg$CH-cardinality} and the other stronger definition \textit{$\neg$CHS-cardinality}. We will see that those definitions give a richer theory of cardinality, which is similar to enriching the natural numbers with the rational numbers or real numbers.

In essence, the definitions and axioms of this section and of section \ref{level n} only aim at formalizing objects of the type $X\cup P^{-1}(Y_1)\cup ...\cup P^{-1}(Y_{n_1})\cup ...\cup P^{-k}(Z_1)\cup ...\cup P^{-k}(Z_{n_k})$. For the developments associated to the falsity of the continuum hypothesis, it would be enough to construct a class containing those objects such that the internal structure is ignored.

For the following, we now add to our system $\text{EZF}^p$ an axiom which we will call paired forms where the variables under the quantifiers are not only restricted to Zermelo sets but can take any element of $\text{EZF}^p$. The only use we will make of this axiom is to construct the union forms.

\begin{axiom}(Paired Forms) Let $C$ be called a paired form and let $\mathcal{P}$ be the class of paired forms,
\begin{center}$(\forall A\in\text{EZF}^p)(\forall B\in\text{EZF}^p)(\exists C\in\mathcal{P})\forall X(X\in C\Leftrightarrow (X= A\vee X=B))$.\end{center}\end{axiom}

We will say that $X\in\mathcal{P}$ is a \textit{paired form} and note that we do not consider it to be a set, this means that a paired form cannot appear in any other axiom (except the axiom of union forms which will be defined below). Here are some examples $\{1,2,X, Y, P^{-1}(Z), P^{-6}(1,3,A,B)\}$ and $\{1,2, 4, P^{-1}(Y), P^{-1}(Z)\}$. It is interesting to note that most of the results presented above can be proved by using paired forms instead of Zermelo sets, but we will not consider this further.

For our purpose, we need to extend the axiom of union. We take $\mathcal{P}$ to be the class of paired form, where a paired form is a collection of sets of $\text{EZF}$. For each $n$ we have the following axiom.

\begin{axiom}[Union Forms] Let $D$ be called a union form and let `$\in_0$' be the usual membership `$\in$'. For all paired forms $C\in\mathcal{P}$ and for all $n\geq 0$,
\begin{center}$(\exists D\in\mathcal{U})\forall S[S\in_n D\Leftrightarrow \exists B(S\in_n B \wedge B\in C)].$\end{center}\end{axiom}

If $C$ has a finite number of elements, then we will denote $D$ as $A_1\cup A_2\cup ... \cup A_n$ if $C=\{A_1, A_2, ... , A_n\}$ and say that $D$ is a \textit{finite union form}. Again, a union form is not a set or a paired form so the other axioms cannot apply to them. Note that the union is commutative and associative since $C$ is not ordered.

If $C,B$ and $S$ are Zermelo, the union forms axiom is equivalent to the axiom of union of ZF, except for the fact that the new objects are members of the class $\mathcal{U}$ and not the universe of discourse of ZF.

The axiom of extended union gives rise to a wider range of objects, for example $P^{-1}(\{1,2,3,4,5\})\cup P^{-1}(P^{-1}(X))$ and $P^{-1}(X) \cup P^{-1}(Y)\cup Z \cup P^{-1}(P^{-1}(W))$.

We have union forms which can have an infinite number of operators $P$ and $P^{-1}$, hence for the following, we will restrict the domain. To define $\text{EZF}^p$, we will take the Zermelo sets, the non-Zermelo sets of arbitrary level and the sets which arise from the finite union of a combination of Zermelo. For example, $X\cup Y \cup P^4(Z)\cup P^{-3}(U) \cup P^{-3}(V) \cup P^{-4}(W)$ is a set of $\text{EZF}^p$. Since the classical union is commutative and associative and since the union of two Zermelo sets is a Zermelo set, we can combine all the Zermelo sets together. Also, since $P^{-1}(P(X))=X$, we have that every set of $\text{EZF}^*$ can be written as
$$X\cup P^{-1}(Y_1)\cup P^{-1}(Y_2)\cup ...\cup P^{-1}(Y_{n_1})\cup ...\cup P^{-k}(Z_1)\cup P^{-k}(Z_2)\cup ...\cup P^{-k}(Z_{n_k}),$$
with $X,Y_1,Y_2,...,Y_{n_1},...,Z_1,Z_2,...,Z_{n_k}$ are all non-powered Zermelo sets. Formally, we define $\text{EZF}^*$ as follows.

\begin{definition}\label{terms}We define the collection $\text{EZF}^*$ of real sets from $\text{EZF}$ with the extended axiom of union as:

\begin{itemize}
\item[1.] If $X$ is a Zermelo set of $\mathcal{V}$, then $X\in \text{EZF}^*$,

\item[2.] If $X$ is a non-Zermelo set of level $k$ of $\text{EZF}^p$, then $X\in \text{EZF}^*$,

\item[3.] If $Z$ is a Zermelo finite union form of $\mathcal{U}$, then $X\in \text{EZF}^*$.
\end{itemize}
\end{definition}

We define two functions which will be used in the extended definitions of cardinality. In the following, we will not consider that we can write every real set similar to $P^{-1}(A)\cup P^{-1}(B)$ into a real set $P^{-1}(C)$ for some Zermelo set $C$. The reason we have introduced the function $\tau$ is that we want to stay in a more general setting since the concept of union presented here might not be the only interesting approach.

\begin{definition}Let $Y_1$ be Zermelo and let $Y_2$ be a non-empty non-powered Zermelo set and let $m$ be a positive integer. We define $\rho$, a function which takes as input a set $X$ of $\text{EZF}^p$ and returns an integer or $-\infty$, as follows:

$$\rho(X) = \left\{
     \begin{array}{ll}
       0 & \text{if $X=Y_1$} \\
       -m & \text{if $X=P^{-m}(Y_3)$}\\
       -\infty & \text{if $X=\emptyset$}
     \end{array}\right.$$
\end{definition}

We now give a definition which returns the level of non-Zermelo sets of $\text{EZF}^p$.

\begin{definition}Let $Y_1$ be Zermelo and let $Y_2$ be a non-powered Zermelo set and let $m$ be a positive integer. We define $\tau$, a function which takes as input a set $X$ of $\text{EZF}^p$ and returns a cardinal number, as follows:

$$\tau(X) = \left\{
     \begin{array}{ll}
       |Y_1| & \text{if $X=Y_1$} \\
       |Y_3| & \text{if $X=P^{-m}(Y_2)$}
     \end{array}\right.$$
\end{definition}

We now give a useful formal definition, but in a few words, it is equivalent to ordering the components of the union in the following way: $$X\cup P^{-1}(Y_1)\cup ...\cup P^{-1}(Y_{n_1})\cup ...\cup P^{-k}(Z_1)\cup ...\cup P^{-k}(Z_{n_k})$$
\noindent such that $|Y_1|\leq |Y_2|\leq ...\leq |Y_{n_1}|$ and ... and $|Z_1|\leq ...\leq |Z_{n_k}|$.

We now give a definition which returns the non-powered Zermelo set to which the inverse powerset has been applied.

\begin{definition}Let $X$ be a set of $\text{EZF}^*$ such that $X=X_1\cup X_2 \cup ... \cup X_n$, then
$X$ is well-represented if and only if \begin{center}$\rho(X_1)\geq ... \geq \rho(X_n)$

and

if $\rho(X_i)= ... = \rho(X_{i+k})$ then $\tau(X_i)\geq ... \geq \tau(X_{i+k})$ .\end{center}\end{definition}

\subsection{$\neg$Continuum Hypothesis}

In analogy with numbers, the definition of the weaker extended definition of cardinality \textit{$\neg$CH-cardinality} can be seen as ordering numbers in the following manner.

$$0.223<0.224<0.225<...<0.255<...$$$$<0.324<0.3241<0.3242<1<...$$$$<1.111<...<1.99999<...$$

\begin{definition}[$\neg CH$-cardinality]\label{notcontinuum}Let X, Y be sets of $\text{EZF}^*$. Let $X=X_1\cup X_2 \cup ... \cup X_n$ and let $Y=Y_1\cup Y_2 \cup ... \cup Y_m$ where only $X_1,Y_1$ are Zermelo (possibly empty) and where $X_2,...,X_n, Y_2,...,Y_m$ are all non-empty and non-Zermelo such that $X_2\cup X_3 \cup ... \cup X_n$ and $Y_2\cup Y_3 \cup ... \cup Y_m$ are both well-represented. Then,

$|X|_{\neg ch}\leq |Y|_{\neg ch}$ if and only if  $|X_1|<|Y_1|$ or each of the following are simultaneously valid:
\begin{center}$\begin{array}{l}
    |X_1|=|Y_1|, \\
    n\leq m, \\
	\text{$\rho(X_i)\leq\rho(Y_i)$ for all $i$ such that $n\geq i\geq 2$}, \\
	\text{$\tau(X_i)\leq\tau(Y_i)$ for all $i$ such that $n\geq i\geq 2$}, \\
		\end{array}$\end{center}

\noindent We have equality on the left hand side if and only if $|X_1|=|Y_1|$, $n=m$, $\rho(X_i)=\rho(Y_i)$ and $\tau(X_i)=\tau(Y_i)$.
\end{definition}

By construction of the axiom of paired forms and union forms, we have that our extended union is commutative and associative. Since the union is commutative and since the union of two Zermelo sets is a Zermelo set, each set of $\text{EZF}^*$ can be well-represented. Note that the transitivity of $\leq$ on the $\neg CH$-cardinal numbers follows from the transitivity of $\leq$ on the integers and $\leq$ on the classical cardinal numbers.

\begin{proposition}Let $X$ and $Y$ be Zermelo, then $|X|_{\neg ch}\leq |Y|_{\neg ch}$ if and only if $|X|\leq|Y|$.\end{proposition}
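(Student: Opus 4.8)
The plan is to unwind both extended cardinality notions back to the classical one when the sets involved are Zermelo, and then quote the reflexivity of $\leq$ on classical cardinals. First I would observe that if $X$ is a Zermelo set, then in the canonical union-form decomposition $X = X_1 \cup X_2 \cup \dots \cup X_n$ required by Definition \ref{notcontinuum}, we must have $n = 1$ and $X_1 = X$: by construction of $\text{EZF}^*$ every set is a finite union form of Zermelo sets and non-Zermelo sets of various levels, and since the union of two Zermelo sets is again Zermelo (classical union axiom), a Zermelo set has no nontrivial non-Zermelo components. So the only component is the Zermelo part $X_1 = X$, and likewise $Y = Y_1$ with $m = 1$.

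Next I would apply Definition \ref{notcontinuum} directly in this degenerate case. With $n = m = 1$ there are no indices $i$ with $n \geq i \geq 2$, so the three ``for all $i$'' clauses about $\rho$ and $\tau$ are vacuously true, and the clause $n \leq m$ reads $1 \leq 1$. Hence $|X|_{\neg ch} \leq |Y|_{\neg ch}$ holds if and only if either $|X_1| < |Y_1|$ or $|X_1| = |Y_1|$ (with the remaining clauses vacuous) — that is, if and only if $|X_1| \leq |Y_1|$. Since $X_1 = X$ and $Y_1 = Y$, this is exactly $|X| \leq |Y|$, which is what we want. The converse direction is identical: $|X| \leq |Y|$ splits into the strict or equal case, each of which triggers the corresponding disjunct in the definition.

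I expect the only genuine point needing care is the claim that a Zermelo set is well-represented with a trivial decomposition, i.e. that the ``canonical form'' the definition presupposes really does collapse to the single Zermelo component. This is where I would lean on the remark preceding the proposition: the axioms of paired forms and union forms make $\cup$ commutative and associative, so all Zermelo pieces can be amalgamated into $X_1$, and there are simply no non-Zermelo pieces left when $X$ itself is Zermelo. Everything after that is a matter of reading off the definition in the $n = m = 1$ case, so there is no real obstacle — the proposition is essentially a consistency check that $\neg CH$-cardinality extends classical cardinality, parallel to the corresponding remark already made for the $CH$-cardinality after Definition \ref{smallch}.
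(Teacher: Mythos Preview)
Your proposal is correct and follows essentially the same route as the paper: reduce to the trivial decomposition $X=X_1$, $Y=Y_1$ with $n=m=1$, observe that the conditions on $\rho$ and $\tau$ for $i\geq 2$ are vacuous, and read off that the definition collapses to $|X_1|<|Y_1|$ or $|X_1|=|Y_1|$, i.e.\ $|X|\leq|Y|$. Your write-up is slightly more explicit about why the decomposition trivializes and why the remaining clauses are vacuous, but the argument is the same.
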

\begin{proof}Since $X$ and $Y$ are Zermelo,  we have that $X=X_1$, $Y=Y_1$ and $m=n=1$. By definition of $\neg CH$-cardinality we have that  $|X|_{\neg ch}\leq |Y|_{\neg ch}$ if and only if  [$|X_1|<|Y_1|$ or ($|X_1|=|Y_1|$ and $m=n$)]. But $|X_1|<|Y_1|$ or $|X_1|=|Y_1|$ is equivalent to $|X|\leq|Y|$ and hence we find $|X|_{\neg ch}\leq |Y|_{\neg ch}\Leftrightarrow|X|\leq|Y|$.\end{proof}

We are now in a context where we can prove the falsity of the generalized continuum hypothesis.

\begin{theorem}[$\neg$ Continuum Hypothesis]Let $Z$ be a Zermelo set such that $|Z|_{\neg ch}<|P(Z')|_{\neg ch}$ where $Z'$ is a non-powered Zermelo set, then $|Z|_{\neg ch}<|Z\cup P^{-n}(Z')|_{\neg ch}$ and $|Z\cup P^{-n}(Z')|_{\neg ch}<|P(Z)|_{\neg ch}$ where $n$ is any integer greater than or equal to $1$.\end{theorem}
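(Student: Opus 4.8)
The plan is to read off the canonical union-form decompositions of the three sets $Z$, $Z\cup P^{-n}(Z')$ and $P(Z)$, and then compare them directly against Definition \ref{notcontinuum}, using Cantor's theorem at the single place where a genuine size comparison is needed. First I would record the decompositions. Since $Z$ is Zermelo, its canonical form in $\text{EZF}^*$ is just $Z$ itself, with Zermelo part $Z$ and no non-Zermelo components; likewise $P(Z)$ is Zermelo (recall $P$ applied to a Zermelo set is always Zermelo), so its canonical form is $P(Z)$ with Zermelo part $P(Z)$ and no non-Zermelo components. Because $Z'$ is a non-powered Zermelo set, $P^{-n}(Z')$ is a genuine non-Zermelo set of level $n$, so the canonical form of $Z\cup P^{-n}(Z')$ has Zermelo part $Z$ together with the single non-Zermelo component $P^{-n}(Z')$; having only one non-Zermelo component it is automatically well-represented, and $\rho(P^{-n}(Z'))=-n\le 0$ while $\tau(P^{-n}(Z'))=|Z'|$.

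For $|Z|_{\neg ch}<|Z\cup P^{-n}(Z')|_{\neg ch}$ I would unwind Definition \ref{notcontinuum}. The two sides have the same Zermelo part $Z$, so $|X_1|=|Y_1|$; the left has one union component and the right has two, so the count condition ($1\le 2$) holds and the $\rho$- and $\tau$-clauses are vacuous, which gives $|Z|_{\neg ch}\le|Z\cup P^{-n}(Z')|_{\neg ch}$ immediately. The reverse inequality fails, since it would require the count condition $2\le 1$; hence by Definition \ref{c} the inequality is strict.

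For $|Z\cup P^{-n}(Z')|_{\neg ch}<|P(Z)|_{\neg ch}$ the Zermelo parts differ, namely $Z$ on the left and $P(Z)$ on the right, and Cantor's theorem gives $|Z|<|P(Z)|$; so the first disjunct $|X_1|<|Y_1|$ of Definition \ref{notcontinuum} applies and yields $|Z\cup P^{-n}(Z')|_{\neg ch}\le|P(Z)|_{\neg ch}$. For the reverse one would need either $|P(Z)|<|Z|$ or $|P(Z)|=|Z|$, both contradicted by Cantor's theorem, so the reverse fails and the inequality is again strict. The whole argument is uniform in $n\ge 1$, since $n$ enters only through $\rho(P^{-n}(Z'))=-n$ and through the component count, and in neither inequality was the particular value of $n$ used.

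The computations are routine; the only points requiring care are (i) verifying that $Z\cup P^{-n}(Z')$ really is a well-represented member of $\text{EZF}^*$ with exactly the claimed decomposition --- this is where non-poweredness of $Z'$ and the fact that adjoining a Zermelo set introduces no new non-Zermelo components are used --- and (ii) invoking Cantor's theorem for the second inequality. I do not expect a real obstacle here: the hypothesis $|Z|_{\neg ch}<|P(Z')|_{\neg ch}$ serves mainly to place us in the intended non-degenerate situation and is subsumed by Cantor's theorem once the decompositions are in hand.
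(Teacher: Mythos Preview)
Your proposal is correct and follows essentially the same route as the paper: decompose each set into its Zermelo part plus non-Zermelo components, then for the first inequality use that the Zermelo parts agree while the component counts differ ($1<2$), and for the second inequality invoke Cantor's theorem on the Zermelo parts $|Z|<|P(Z)|$. Your observation that the hypothesis $|Z|_{\neg ch}<|P(Z')|_{\neg ch}$ is not actually used is accurate---the paper's own proof does not invoke it either; one small presentational point is that the paper obtains strictness directly from the equality clause in Definition~\ref{notcontinuum} (since $n\neq m$) rather than by appealing to a separate ``$<$'' definition.
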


\begin{proof}First, we must show that $|Z|_{\neg ch}<|Z\cup P^{-n}(Z')|_{\neg ch}$. In the definition of $\neg CH$-cardinality, let $X=Z$ and $Y=Z\cup P^{-n}(Z')$, then since $|X_1|=|Z|=|Y_1|$, $n=1 < 2=m$, $\rho(X_1)=\rho(Y_1)$ and $\tau(X_1)=\tau(Y_1)$,  we have by definition \ref{notcontinuum} that $|Z|_{\neg ch}<|Z\cup P^{-n}(Z')|_{\neg ch}$.

Secondly, we must show that $|Z\cup P^{-n}(Z')|_{\neg ch}<|P(Z)|_{\neg ch}$. Taking $X=Z\cup P^{-n}(Z')$ and $Y=P(Z)$, then since $|Z|=|X_1|<|Y_1|=|P(Z)|$ by Cantor's theorem, we have by definition of $\neg CH$-cardinality, $|Z\cup P^{-n}(Z')|_{\neg ch}<|P(Z)|_{\neg ch}$.
\end{proof}

Since $n$ is an arbitrary integer, the statement of the theorem tells us that there are infinitely many $\neg CH$-cardinal numbers between $|\mathbb{N}|_{\neg ch}$ and $|P(\mathbb{N})|_{\neg ch}$. Moreover, strictly between $|Z\cup P^{-n}(Z)|_{\neg ch}$ and $|P(Z)|_{\neg ch}$ there are also infinitely many $\neg CHW$-cardinal numbers. Taking $n\geq 2$, examples are: \begin{center}$|Z\cup P^{-n+1}(Z')|_{\neg ch}$, $|Z\cup P^{-n}(Z')\cup P^{-n-1}(Z')|_{\neg ch}$, $|Z\cup P^{-n}(Z')\cup P^{-n-2}(Z')|_{\neg ch}$, $|Z\cup P^{-n}(Z')\cup P^{-n-1}(Z')\cup P^{-n-1}(Z')|_{\neg ch}$, $|Z\cup P^{-n}(Z')\cup P^{-n-1}(Z')\cup P^{-n-2}(Z')|_{\neg ch}$ and so forth.\end{center} We also have infinitely many $\neg CH$-cardinal numbers between $|Z|_{\neg ch}$ and $|Z\cup P^{-n}(Z')|_{\neg ch}$, for example $|Z\cup P^{-n-1}(Z')|_{\neg ch}$, $|Z\cup P^{-n-2}(Z')|_{\neg ch}$ and so forth. An interesting question is to ask if there is a $\neg CH$-cardinal number between $|Z\cup P^{-n-1}(Z')|_{\neg ch}$ and $|Z\cup P^{-n}(Z')|_{\neg ch}$. This question, which relies on the definition of $\neg CH$-cardinality, might be undecidable in our setting. We will see in the following that it is possible to give an extension to the definition of cardinality which can give an answer to this question. Furthermore, to ask if there is always an extended cardinal number between two extended cardinal numbers could be seen as the real question behind the falsity of the continuum hypothesis. Seen as an analogy with numbers, it is similar to ask if between each pair of real numbers there is a real number.

We can also prove an extended version of Schroeder-Bernstein theorem for the $\neg CH$-cardinality.

\begin{proposition}If $|X|_{\neg ch}\leq |Y|_{\neg ch}$ and $|Y|_{\neg ch}\leq |X|_{\neg ch}$ then $|X|_{\neg ch}= |Y|_{\neg ch}$.\end{proposition}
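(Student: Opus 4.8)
The plan is to unfold Definition \ref{notcontinuum} and reduce the statement to the classical Schroeder-Bernstein theorem, applied coordinate by coordinate, together with the antisymmetry of the lexicographic-style order on the auxiliary data $(|X_1|,\, n,\, (\rho(X_i))_{i\ge 2},\, (\tau(X_i))_{i\ge 2})$. First I would fix well-represented expressions $X = X_1\cup X_2\cup\dots\cup X_n$ and $Y = Y_1\cup Y_2\cup\dots\cup Y_m$ exactly as in Definition \ref{notcontinuum}, with $X_1,Y_1$ the (possibly empty) Zermelo parts and $X_2,\dots,X_n,Y_2,\dots,Y_m$ non-empty, non-Zermelo, and with the tails $X_2\cup\dots\cup X_n$ and $Y_2\cup\dots\cup Y_m$ well-represented; such expressions exist because the extended union is commutative and associative, as noted immediately after the definition.

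Next I would exploit the hypotheses. From $|X|_{\neg ch}\le |Y|_{\neg ch}$ the definition gives either $|X_1|<|Y_1|$ or else $|X_1|=|Y_1|$ together with the side conditions; in both cases $|X_1|\le |Y_1|$. Symmetrically $|Y_1|\le |X_1|$, so the classical Schroeder-Bernstein theorem forces $|X_1|=|Y_1|$. Consequently the first disjunct $|X_1|<|Y_1|$ is impossible in each of the two $\neg ch$-inequalities, so each must hold via the second disjunct. Comparing these: $n\le m$ and $m\le n$, hence $n=m$; and for every $i$ with $2\le i\le n$ we obtain $\rho(X_i)\le\rho(Y_i)$, $\rho(Y_i)\le\rho(X_i)$, $\tau(X_i)\le\tau(Y_i)$, $\tau(Y_i)\le\tau(X_i)$. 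Since the $\rho$-values lie in $\{0,-1,-2,\dots\}\cup\{-\infty\}$ they are linearly ordered, so $\rho(X_i)=\rho(Y_i)$; since the $\tau$-values are classical cardinal numbers, one more use of Schroeder-Bernstein gives $\tau(X_i)=\tau(Y_i)$.

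Finally I would invoke the equality clause of Definition \ref{notcontinuum}: we have established $|X_1|=|Y_1|$, $n=m$, $\rho(X_i)=\rho(Y_i)$ and $\tau(X_i)=\tau(Y_i)$ for all relevant $i$, which is precisely the stated criterion for $|X|_{\neg ch}=|Y|_{\neg ch}$.

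The step that needs the most care is not deep but is the only non-mechanical point: one should check that the relation $\le$ on $\neg CH$-cardinal numbers is genuinely well-defined, that is, independent of the chosen well-represented expression. When two tail summands share the same value of $\rho$ and of $\tau$ their relative order is unconstrained by the definition of well-represented, but then the sequences of $\rho$- and $\tau$-values agree as multisets, which is all that the comparison in Definition \ref{notcontinuum} actually reads off; so the conclusion is unaffected. Granting this, the proposition is just antisymmetry of a lexicographic order sitting on top of classical Schroeder-Bernstein applied to the cardinal coordinates $|X_1|$ and $\tau(X_i)$.
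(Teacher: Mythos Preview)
Your argument is correct and follows essentially the same route as the paper's own proof: rule out the strict disjunct $|X_1|<|Y_1|$ (equivalently, show $|X_1|=|Y_1|$), then read off $n=m$, $\rho(X_i)=\rho(Y_i)$, $\tau(X_i)=\tau(Y_i)$ from the remaining clauses applied in both directions. You are a bit more explicit in invoking the classical Schroeder--Bernstein theorem for the cardinal coordinates $|X_1|$ and $\tau(X_i)$, and your remark on well-definedness of the comparison is a useful addition, but the overall structure matches the paper's proof.
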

\begin{proof}When $|X|_{\neg ch}\leq |Y|_{\neg ch}$ and $|Y|_{\neg ch}\leq |X|_{\neg ch}$, the only possibility is when $|X_1|=|Y_1|$. Since, if $|X_1|<|Y_1|$ this implies that $|Y|_{\neg ch}\nleq |X|_{\neg ch}$. By assumption, we have $m\leq n$, $n\leq m$, $\rho(X_i)\leq\rho(Y_i)$, $\tau(Y_i)\leq\tau(X_i)$, $\rho(X_i)\geq\rho(Y_i)$ and $\tau(Y_i)\geq\tau(X_i)$ for all $i$ and thus we have $|X_1|=|Y_1|$, $n=m$, $\rho(X_i)=\rho(Y_i)$ and $\tau(X_i)=\tau(Y_i)$. Therefore we can conclude that $|X|_{\neg ch}= |Y|_{\neg ch}$.\end{proof}

\subsection{Strong $\neg$Continuum Hypothesis}\label{negcontinuum}

Before giving the $\neg CHS$-cardinality definition, note that if $X=X_1\cup X_2 \cup ... \cup X_n$, $Y=Y_1\cup Y_2 \cup ... \cup Y_m$ and $n>m$, we can write $Y$ as $Y_1\cup Y_2 \cup ... \cup Y_n$, by adding a certain amount of union components which are empty sets.

\begin{definition}[$\neg CHS$-cardinality]\label{lexismaller} Let X, Y be sets of $\text{EZF}^*$. Let $X=X_1\cup X_2 \cup ... \cup X_n$ and let $Y=Y_1\cup Y_2 \cup ... \cup Y_n$ where $X_1,Y_1$ are Zermelo (possibly empty) and where $X_2,...,X_n, Y_2,...,Y_n$ are non-Zermelo or empty with $X_2\cup X_3 \cup ... \cup X_n$ and $Y_2\cup Y_3 \cup ... \cup Y_n$ both well-represented. Then, $|X|_{\neg chs}<|Y|_{\neg chs}$ if and only if

\medskip

\noindent  for some $k$ such that $k\leq n$, we have $\rho(X_i)=\rho(Y_i)$, $\tau(X_i)=\tau(Y_i)$, $\rho(X_k)<\rho(Y_k)$ and $\tau(X_k)\leq\tau(Y_k)$ for all $i<k$.

\medskip

\noindent or

\medskip

\noindent  for some $k$ such that $k\leq n$, we have $\rho(X_i)=\rho(Y_i)$, $\tau(X_i)=\tau(Y_i)$, $\rho(X_k)\leq\rho(Y_k)$ and $\tau(X_k)<\tau(Y_k)$ for all $i<k$ .
\end{definition}

\begin{definition}\label{equalnegcard}Using the same assumptions as in definition \ref{lexismaller}.

$|X|_{\neg chs}=|Y|_{\neg chs}$ if and only if \begin{center}for all $j\leq n$, we have $\rho(X_j)=\rho(Y_j)$ and $\tau(X_j)=\tau(Y_j)$.\end{center}
\end{definition}

The definition of $\neg CHS$-cardinality is basically the lexicographic order or, in analogy with numbers, it is similar to ordering of real numbers in decimal notation. Note that the transitivity of $<$ on the $\neg CHS$-cardinal numbers follows from the transitivity of $<$ on the integers and $<$ on the classical cardinal numbers.

\begin{proposition}If $|X|_{\neg chs}\leq|Y|_{\neg chs}$ and $|Y|_{\neg chs}\leq|Z|_{\neg chs}$, then $|X|_{\neg chs}\leq|Z|_{\neg chs}$.\end{proposition}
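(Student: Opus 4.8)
The plan is to recognize Definition \ref{lexismaller} as a lexicographic comparison of the finite sequences of pairs $\bigl(\rho(X_i),\tau(X_i)\bigr)_{i}$, in which two pairs at a fixed index are compared coordinatewise, and then to propagate the hypotheses through this structure. First I would fix well-represented decompositions $X=X_1\cup\cdots\cup X_n$, $Y=Y_1\cup\cdots\cup Y_n$, $Z=Z_1\cup\cdots\cup Z_n$ of a common length $n$, padding the shorter ones with empty union components as described just before Definition \ref{lexismaller}; since $\rho(\emptyset)=-\infty$, $\tau(\emptyset)=0$, and $-\infty\leq\alpha$ and $-\infty=-\infty$ for every relevant value $\alpha$, this padding does not affect any of the comparisons. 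Throughout I read $|A|_{\neg chs}\leq|B|_{\neg chs}$ as the disjunction of Definitions \ref{lexismaller} and \ref{equalnegcard}, and I use the transitivity of $\leq$ on the integers-with-$-\infty$ and the transitivity of $\leq$ on cardinal numbers.

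Next I would dispose of the cases in which one of the two hypotheses is an equality. If $|X|_{\neg chs}=|Y|_{\neg chs}$, then by Definition \ref{equalnegcard} we have $\rho(X_j)=\rho(Y_j)$ and $\tau(X_j)=\tau(Y_j)$ for all $j$, so every condition in Definitions \ref{lexismaller} and \ref{equalnegcard} that mentions $Y$ holds verbatim with $X$ in its place; hence $|Y|_{\neg chs}\leq|Z|_{\neg chs}$ gives $|X|_{\neg chs}\leq|Z|_{\neg chs}$ at once. The case $|Y|_{\neg chs}=|Z|_{\neg chs}$ is symmetric.

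There remains the case $|X|_{\neg chs}<|Y|_{\neg chs}$ and $|Y|_{\neg chs}<|Z|_{\neg chs}$. Here I would pick an index $k$ witnessing the first inequality in the sense of Definition \ref{lexismaller}, so that $\rho(X_i)=\rho(Y_i)$ and $\tau(X_i)=\tau(Y_i)$ for all $i<k$, while $\rho(X_k)\leq\rho(Y_k)$, $\tau(X_k)\leq\tau(Y_k)$ with at least one of these strict; likewise pick $l$ witnessing the second inequality. I then split on whether $k<l$, $l<k$, or $k=l$. In every case I claim $\min(k,l)$ witnesses $|X|_{\neg chs}<|Z|_{\neg chs}$: for indices below $\min(k,l)$ the pairs of $X$ and $Z$ coincide because each coincides with the corresponding pair of $Y$; at the index $\min(k,l)$, one of the two supplied comparisons is an equality of pairs while the other is a coordinatewise $\leq$ with a strict coordinate, so composing them keeps both coordinates $\leq$ and keeps that coordinate strict. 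This produces exactly one of the two disjuncts of Definition \ref{lexismaller}, whence $|X|_{\neg chs}<|Z|_{\neg chs}$ and in particular $|X|_{\neg chs}\leq|Z|_{\neg chs}$.

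I expect the substance of the argument to be pure bookkeeping; the one point demanding care is that the coordinatewise order on the pairs $\bigl(\rho,\tau\bigr)$ is only a partial order, so I must check that the only comparisons actually needed are the ones the hypotheses provide — equalities of pairs strictly before the first discrepancy, and a coordinatewise $\leq$ at it — and never a comparison of two pairs that could be incomparable. Keeping track of which of $\rho$ or $\tau$ carries the strict inequality through the composition, and confirming it survives when one side contributes an equality, is the only place where a slip could occur.
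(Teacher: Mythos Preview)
Your proposal is correct and follows essentially the same route as the paper: pad to a common length, dispose of the equality cases by substitution, and in the strict--strict case use $\min(k,l)$ as the witnessing index. One small wording slip: when $k=l$ neither supplied comparison at that index is an equality of pairs, but your conclusion still holds there since composing two coordinatewise $\leq$'s, each with a strict coordinate, yields coordinatewise $\leq$ with at least one strict coordinate; the paper's proof is in fact looser than yours on exactly this point.
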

\begin{proof}Let X, Y, Z be sets of $\text{EZF}^*$. Let $X=X_1\cup X_2 \cup ... \cup X_n$, $Y=Y_1\cup Y_2 \cup ... \cup Y_n$ and let $Z=Z_1\cup Z_2 \cup ... \cup Z_n$ where $X_1,Y_1,Z_1$ are Zermelo sets (possibly empty) and where $X_2,...,X_n,Y_2,...,Y_n,Z_2,...,Z_n$ are non-Zermelo or empty with $X_2\cup X_3 \cup ... \cup X_n$, $Y_2\cup Y_3 \cup ... \cup Y_n$ and $Z_2\cup Z_3 \cup ... \cup Z_n$ are each well-represented. This can be done by adding empty union components to make sure that each $X,Y,Z$ have exactly $n$ components.

If $|X|_{\neg chs}=|Y|_{\neg chs}$ and $|Y|_{\neg chs}=|Z|_{\neg chs}$ then by definition \ref{equalnegcard}, we must have that $|X|_{\neg chs}=|Z|_{\neg chs}$.

If $|X|_{\neg chs}<|Y|_{\neg chs}$ and $|Y|_{\neg chs}=|Z|_{\neg chs}$ then there is a $k_1$ such that for all $i<k_1$, $\rho(X_i)=\rho(Y_i)$, $\tau(X_i)=\tau(Y_i)$ and such that $$[\rho(X_{k_1})<\rho(Y_{k_1})\wedge\tau(X_{k_1})\leq\tau(Y_{k_1})]\vee[\rho(X_{k_1})\leq\rho(Y_{k_1})\wedge\tau(X_{k_1})<\tau(Y_{k_1})].$$ Since $\tau(Y_j)=\tau(Z_j)$ and $\rho(Y_j)=\rho(Z_j)$ for all $j<n$, we can take $k=k_1$ and conclude, using the $\neg CHS$-cardinality definition, that $|X|_{\neg chs}<|Z|_{\neg chs}$. Similarly, we prove that if $|X|_{\neg chs}=|Y|_{\neg chs}$ and $|Y|_{\neg chs}<|Z|_{\neg chs}$, then $|X|_{\neg chs}<|Z|_{\neg chs}$.

If $|X|_{\neg chs}<|Y|_{\neg chs}$ and $|Y|_{\neg chs}<|Z|_{\neg chs}$, then we take $k_1$ for the $k$ which appears in the $\neg CHS$-cardinality definition for $|X|_{\neg chs}<|Y|_{\neg chs}$ and take $k_2$ for the $k$ which comes from $|Y|_{\neg chs}<|Z|_{\neg chs}$. Let $m=\min\{k_1, k_2\}$.

If $m=k_1$, this means that $k_2>m$ and that $\tau(Y_i)=\tau(Z_i)$ and $\rho(Y_i)=\rho(Z_i)$ for all $i<k_2$. Therefore, we have $\tau(X_m)<\tau(Y_m)=\tau(Z_m)$ or $\rho(X_m)<\rho(Y_m)=\rho(Z_m)$ which implies by the $\neg CH$-cardinality definition that $|X|_{\neg chs}<|Z|_{\neg chs}$. In a similar manner, if $m=k_2$, then we find that $\tau(X_m)=\tau(Y_m)<\tau(Z_m)$ or $\rho(X_m)=\rho(Y_m)<\rho(Z_m)$ which implies that $|X|_{\neg chs}<|Z|_{\neg chs}$.
\end{proof}

We now prove the stronger version of the negative of the continuum hypothesis. A good way to make the analogy with real numbers is to think of the superscript negative integers of the inverse powerset operator as the position of the decimal in the decimal expansion of a real number. Also, we can think of the cardinal number returned by the function $\tau$ as the value of that same decimal.

\begin{theorem}[$\neg$ Continuum Hypothesis]Let $X,Y$ be sets of $\text{EZF}^*$ such that $|X|_{\neg chs}<|Y|_{\neg chs}$, then there is a set $U$ of $\text{EZF}^*$ such that $|X|_{\neg chs}<|U|_{\neg chs}<|Y|_{\neg chs}$.\end{theorem}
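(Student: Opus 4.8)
The plan is to exhibit $U$ by appending to $X$ a single new non-Zermelo component placed strictly below everything that occurs in $X$ or in $Y$. Write $X$ and $Y$ in their well-represented forms $X = X_1 \cup \dots \cup X_p$ (with $X_1$ the Zermelo part, possibly empty, and $X_2,\dots,X_p$ the non-empty non-Zermelo components) and similarly for $Y$, and pad both with empty union components so that the two expressions have the same number of components; the comparison of Definition \ref{lexismaller} is then the lexicographic comparison of the sequences of pairs $(\rho(X_i),\tau(X_i))$ and $(\rho(Y_i),\tau(Y_i))$, where a pair $(\rho',\tau')$ counts as strictly below $(\rho'',\tau'')$ when $\rho'\leq\rho''$ and $\tau'\leq\tau''$ with at least one inequality strict, and where an empty padding component has $\rho = -\infty$ and $\tau = 0$. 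Let $L$ be a positive integer strictly larger than every level $m$ for which $P^{-m}(\cdot)$ occurs in $X$ or in $Y$, let $S$ be a fixed non-powered Zermelo set with exactly one element (for instance $S=\{\{\emptyset\}\}$, which contains no empty element and so is not a powerset), and set
$$U := X \cup P^{-L}(S).$$
Since $-L$ is below every level occurring in $X$, in the well-represented form of $U$ the component $P^{-L}(S)$ comes last, so $U = X_1 \cup \dots \cup X_p \cup P^{-L}(S)$ is well-represented and belongs to $\text{EZF}^*$.

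First I would check $|X|_{\neg chs} < |U|_{\neg chs}$. The components of $U$ and $X$ agree in positions $1,\dots,p$, while in position $p+1$ the set $X$ (after padding) has an empty component, with pair $(-\infty,0)$, and $U$ has $P^{-L}(S)$, with pair $(-L,1)$. Since $-\infty < -L$ and $0 < 1$, taking $k=p+1$ in Definition \ref{lexismaller} (first alternative) gives $|X|_{\neg chs} < |U|_{\neg chs}$.

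Next I would check $|U|_{\neg chs} < |Y|_{\neg chs}$. Let $k$ be the least index at which the pair sequences of $X$ and $Y$ differ; such a $k$ exists since $|X|_{\neg chs} < |Y|_{\neg chs}$ forces $X\neq Y$, and reading Definition \ref{lexismaller} at the first point of disagreement yields $\rho(X_k)\leq\rho(Y_k)$ and $\tau(X_k)\leq\tau(Y_k)$ with at least one inequality strict. If $k\leq p$, then $U_k=X_k$, so the pair of $U$ at $k$ is strictly below that of $Y$. If $k>p$, then for every $i$ with $p<i<k$ the component $X_i$ is empty, hence so is $Y_i$ (its pair agrees with that of $X_i$), and well-representation of $Y$ would then force $Y_k$ to be empty too, contradicting the choice of $k$; hence $k=p+1$, and $Y_{p+1}$ is a genuine non-Zermelo component with $\rho(Y_{p+1})>-L$ and $\tau(Y_{p+1})\geq 1$, so the pair $(-L,1)$ of $U_{p+1}$ is again strictly below the pair of $Y_{p+1}$. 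In both cases $U$ and $Y$ have identical pairs in positions $1,\dots,k-1$ and the pair of $U$ at $k$ is strictly below that of $Y$, so Definition \ref{lexismaller} yields $|U|_{\neg chs} < |Y|_{\neg chs}$.

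The point to watch, and the reason the construction inserts a fresh deep component rather than editing an existing one, is that one cannot in general interpolate inside a component: the cardinals returned by $\tau$ need not admit an intermediate value (which is precisely the continuum-type question), and the levels returned by $\rho$ are integers, so the decimal-expansion analogy fails exactly at the step where, for real decimals, one would change a single digit. What rescues density here is that the level index ranges over all positive integers, so there is always room strictly below everything appearing in the two given sets; once the new component is placed there, checking that $U$ is well-represented, lies in $\text{EZF}^*$, and sits strictly between $X$ and $Y$ is routine.
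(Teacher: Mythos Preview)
Your proof is correct and follows essentially the same idea as the paper's: append to $X$ a single non-Zermelo component at a level strictly deeper than anything occurring in $X$ or $Y$, so that $U$ agrees with $X$ on all existing positions and picks up one extra nonzero pair at the end. The paper writes this as $U=X\cup P^{r-1}(K)$ with $r$ the minimum level present and $K$ extracted from the component of smallest $\tau$-value, whereas you use $U=X\cup P^{-L}(S)$ with $L$ larger than every level and $S$ a fixed one-element non-powered set; your choice is a harmless simplification, since once $\rho$ is strictly below everything the first alternative of Definition~\ref{lexismaller} fires regardless of $\tau$, and your case analysis for $|U|_{\neg chs}<|Y|_{\neg chs}$ (splitting on whether the first point of disagreement between $X$ and $Y$ lies within the non-empty part of $X$) is a slightly tidier version of the paper's $m\geq m'$ versus $m<m'$ split.
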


\begin{proof}Let X, Y be sets of $\text{EZF}^*$. Let $X=X_1\cup X_2 \cup ... \cup X_n$ and let $Y=Y_1\cup Y_2 \cup ... \cup Y_n$ where $X_1,Y_1$ are Zermelo (possibly empty) and where $X_2,...,X_n, Y_2,...,Y_n$ are non-Zermelo or empty with $X_2\cup X_3 \cup ... \cup X_n$ and $Y_2\cup Y_3 \cup ... \cup Y_n$ both well-represented.

Since $|X|_{\neg chs}<|Y|_{\neg chs}$, there is a $k$ such that [$\rho(X_k)\leq\rho{Y_k}$ and $\tau(X_k)<\tau{Y_k}$] or [$\rho(X_k)<\rho{Y_k}$ and $\tau(X_k)\leq\tau{Y_k}$]. Let $r$ be the minimum integer (different from $-\infty$) of $\{\rho(X_1), ... , \rho(X_n),\rho(Y_1), ... , \rho(Y_n)\}$. Suppose that the smallest cardinal number (different from $|\emptyset|$) of $\tau(X_1), ... , \tau(X_n),\tau(Y_1), ... , \tau(Y_n)$ is $\tau(Z)$ such that $\tau(Z)=|K'|$ where $K'$ is a Zermelo set. If $Z$ is non-Zermelo take $K=K'$ and if $Z=P(P(...P(K'')...))$ where $K''$ is a non-powered Zermelo set $K$, take $K=K''$.

Take $U=X\cup P^{r-1}(K)$. We will show that $|X|_{\neg chs}<|X\cup P^{r-1}(K)|_{\neg chs}$ and $|X\cup P^{r-1}(K)|_{\neg chs}<|Y|_{\neg chs}$.

In $X=X_1\cup X_2 \cup ... \cup X_n$ there are finitely many empty $X_h$ components. Let $X=X_1\cup X_2 \cup ... \cup X_m$ where $m\leq n$ and where all $X_i$ are not empty. Since $\rho(P^{r-1}(K))<\rho(X_j)$ for all $j\leq m$ , we have that $X\cup P^{r-1}(K)=X_1\cup X_2 \cup ... \cup X_m\cup P^{r-1}(K)$ is well-presented. Thus, we have that $X_i=X_i$ for all $i\leq m$, $0<\rho(P^{r-1}(K))$ and $0<\tau(P^{r-1}(K))$ for $k=m+1$ which means, by definition \ref{lexismaller} of $\neg CHS$-cardinality, that $|X|_{\neg chs}<|X\cup P^{r-1}(K)|_{\neg chs}$.

Similarly, we have $X\cup P^{r-1}(K)=X_1\cup X_2 \cup ... \cup X_m\cup P^{r-1}(K)$ where all $X_i$ are non-empty. In $Y=Y_1\cup Y_2 \cup ... \cup Y_n$ there are finitely many empty components. Let $Y=Y_1\cup Y_2 \cup ... \cup Y_{m'}$ where $m'\leq n$ and where all $Y_i$ are not empty. There are two cases to consider, either $m<m'$ or $m\geq m'$.

Suppose that $m\geq m'$. For an analogy with numbers, this would be the case where $X=1.563$ and $Y=1.570$, so that $1.5630<1.5631<1.5700$. Since $|X|_{\neg chs}<|Y|_{\neg chs}$ there is some $k$ which satisfies a statement of definition \ref{lexismaller}. Thus, since $\rho(P^{r-1}(K))<\rho(X_j)$ for all $j$, adding the union component $P^{r-1}(K)$ to $X$ cannot make $|X\cup P^{r-1}(K)|_{\neg chs}\geq|Y|_{\neg chs}$, therefore we have that $|X\cup P^{r-1}(K)|_{\neg chs}<|Y|_{\neg chs}$.

 Suppose that $m<m'$. For an analogy with numbers, this would be the case where $X=1.5630$ and $Y=1.5631$, so that $1.56300<1.56301<1.56310$. Since $\rho(P^{r-1}(K))<\rho(Y_i)$ for all $i\leq n$ we can take $k=m+1$ and have that $\rho(P^{r-1}(K))<\rho(Y_k)$ and $\tau(P^{r-1}(K))\leq\tau(Y_k)$. Thus, by definition \ref{lexismaller} of $\neg CH$-cardinality we have that
$|X\cup P^{r-1}(K)|_{\neg chs}<|Y|_{\neg chs}$.
\end{proof}

\section{Further Investigations}

Recall that the sets of $\text{EZF}^*$ come from a finite number of applications of the operator $P^{-1}$. It might be worthwhile to investigate objects of the form $P^{-1}(P^{-1}(...P^{-1}(z)...))$ where $P^{-1}$ occurs an uncountable infinite number of times.

The full extension of ZF with the concept of the inverse powerset has not been fully investigated here since we restricted our context to $\text{EZF}^*$. It would be interesting to extend many of the axioms of ZF to include the non-Zermelo sets. In particular, we could further extend the powerset axiom to apply to sets such as $P^{-1}(\{1,2,3,4,5\}\cup P^{-1}(P^{-1}(X)))$, $\{P^{-1}(X),Y\}$ and $P^{-1}(X) \cup P^{-1}(Y)\cup Z \cup P^{-1}(P^{-1}(W))$.

A `fundamental theorem of set theory' resembling the fundamental theorems of arithmetic and algebra could be a very useful tool in set theory. An idea related to extensions of $\text{EZF}^*$ would be to investigate if it is possible to find such a fundamental theorem of set theory where each object of the extension can be represented uniquely in the form
$$X\cup P^{-1}(Y_1)\cup P^{-1}(Y_2)\cup ...\cup P^{-1}(Y_{n_1})\cup ...\cup P^{-k}(Z_1)\cup P^{-k}(Z_2)\cup ...\cup P^{-k}(Z_{n_k}),$$
\noindent where $X,Y_1,Y_2,...,Y_{n_1},...,Z_1,Z_2,...,Z_{n_k}$ can also be written in a certain form. Such a statement would imply in the realm of cardinality, something similar to the uniqueness of representation of numbers in decimal notation.

The continuum hypothesis is undecidable in $\text{ZF}$ and in $\text{ZF}$ with the axiom of choice, but extending to $\text{EZF}$ and $\text{EZF}^*$ permitted us to decide the continuum hypothesis. A key feature about $\text{EZF}$ is that it can be seen as `algebraically closed' under the powerset. To which extent can we diminish the quantity of undecidable statements by setting those statements in the context of the `algebraic closure' of a certain theory? Could the number of undecidable statements be reduced to a finite amount?

It might also be interesting to generalize the powerset operation to an operation which takes as input a set $X$ of $\text{ZF}$ (or $\text{EZF}$) and outputs only a certain collection of subsets of $X$. This contrasts with the powerset $P(X)$ which is an operation which returns a collection of every subset of $X$. It would be interesting to see the impact of this weakening of the concept of powerset and inverse powerset to the arithmetic of cardinal numbers. Furthermore, we could consider objects such as $P^n(X)$ where $n$ can be a rational number or a real number.

Finally, since most of mathematics relies on set theory, further investigations could be to consider introducing the non-Zermelo sets in the context of different mathematical theories such as topology and metric spaces.

\subsection*{Acknowledgments} The author would like to thank Johanna Okker, F. William Lawvere, Peter Aczel, Justin Moore, Greg Restall and Dana Scott for their valuable comments and suggestions.

\end{document}